\newcommand{\RNum}[1]{\uppercase\expandafter{\romannumeral #1\relax}}
\numberwithin{equation}{section}
\def\exd{\text{\normalfont{d}}}
\def\fsw{\sf{FSW}}
\def\sq{\sf{sq}}
\titleformat{\section}[runin]{\bfseries}{\thesection.}{3pt}{}[.]
\begin{document}

\title[From flops to diffeomorphism groups]%
{From flops to diffeomorphism groups}

\author{Gleb Smirnov}




\begin{abstract}
We exhibit many examples 
of closed complex surfaces whose 
diffeomorphism groups are not simply-connected and contain 
loops that are not homotopic to loops of symplectomorphisms.
\end{abstract}

\maketitle

\setcounter{section}{0}
\section{Introduction}\label{intro}
Let $X_0$ be a closed algebraic surface with a single ordinary 
double-point. Assume that $X_0$ admits a global smoothing 
$f \colon \calx \to \Delta$, $X_t = f^{-1}(t)$, 
where $\Delta \subset \cc$ is a complex disk, such that $f$ has a single isolated singularity, modeled in local 
complex coordinates by
\[
f(x_1,x_2,x_3) = x_1^2 + x_2^2 + x_3^2.
\]
As $X_0$ is a surface, it has a unique minimal resolution $q \colon \tilde{X}_0 \to X_0$. The exceptional locus of this resolution 
is a smooth rational curve $C \subset \tilde{X}_0$ 
of self-intersection number $(-2)$. 
Atiyah \cite{Atia} proved that $\tilde{X}_0$ is diffeomorphic 
to the surface $X_t$ for any $t \in \Delta - \left\{ 0 \right\}$. Suppose that
\begin{equation}\label{eq:my-assumptions}
    \pi_1(\tilde{X}_0) = 0,\quad 
    p_g(\tilde{X}_0) > 1,\quad 
    p_g(\tilde{X}_0) > h^{0}(\scro(K_{\tilde{X}_0} - C)),
\end{equation}
where $p_g(\tilde{X}_0) = h^0(\scro(K_{\tilde{X}_0}))$ is the geometric 
genus of $\tilde{X}_0$; 
the latter inequality of \eqref{eq:my-assumptions} 
means that the canonical bundle 
of $\tilde{X}_0$ has a section that does not vanish identically on $C$. 
Suppose further that $\tilde{X}_0$ is endowed with a symplectic form 
$\omega$, which may or may not be K{\"a}hler. 
As $\symp(X,\omega)$ is a subgroup of $\diff(X)$, we have 
the inclusion induced homomorphisms 
$\pi_k(\symp(X,\omega)) \to \pi_k(\diff(X,\omega))$. This note aims to prove the following
\begin{theorem}\label{thm:main}
Under assumptions \eqref{eq:my-assumptions}, the 
homomorphism $\pi_1 (\symp(\tilde{X}_0,\omega)) \to \pi_1 (\diff(\tilde{X}_0))$ is not surjective.
\end{theorem}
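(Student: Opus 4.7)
The proof splits into two halves: (i) construct an explicit loop $\gamma\in\pi_1(\diff(\tilde X_0))$ out of the two small resolutions of the threefold node at the origin of $\calx$; (ii) obstruct the lifting of $\gamma$ to $\pi_1(\symp(\tilde X_0,\omega))$ by a parametric Seiberg--Witten computation in which the hypothesis $p_g>h^0(\scro(K-C))$ appears as a non-vanishing wall-crossing contribution.

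\emph{Step 1: the flop loop.} The total space $\calx$ has exactly one singular point, and by the local model for $f$ this point is an ordinary threefold double point. It admits two small resolutions $\sigma^{\pm}\colon\tilde\calx^{\pm}\to\calx$, each introducing an exceptional $\pp^1$ of normal bundle $\scro(-1)\oplus\scro(-1)$ and making the family $\tilde f^{\pm}\colon\tilde\calx^{\pm}\to\Delta$ smooth. Since the minimal resolution of a surface node is unique, both central fibres are canonically identified with $\tilde X_0$; general fibres are $X_t$. Ehresmann furnishes trivializations $\Phi^{\pm}\colon \tilde X_0\times\Delta\to\tilde\calx^{\pm}$, and since $\sigma^{\pm}$ is an isomorphism over $\Delta^*$ the composition
\[
\gamma\ :=\ (\Phi^-)^{-1}\circ\Phi^+\big|_{\partial\Delta}\ \colon\ S^1\ \longrightarrow\ \diff(\tilde X_0)
\]
is well defined; this is the flop loop.

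\emph{Step 2: building a symplectic $S^2$-family.} Suppose for contradiction that $\gamma$ is freely homotopic in $\diff(\tilde X_0)$ to a loop $\psi\colon S^1\to\symp(\tilde X_0,\omega)$. Combine $\psi$ with the Ehresmann trivializations of the two flops to assemble a smooth $\tilde X_0$-bundle $\pi\colon\calz\to S^2$ with a fibrewise closed 2-form $\hat\omega$ that is fibrewise symplectic throughout, coincides with the relative K\"ahler forms carried by the projective threefolds $\tilde\calx^{\pm}$ over two opposite caps, and equals the constant form $\omega$ transported by $\psi$ over the equatorial annulus. This places us in the setting of parametric Seiberg--Witten theory.

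\emph{Step 3: Seiberg--Witten wall-crossing contradiction.} Consider the spin$^{\rm c}$ structure $\frs$ on $\tilde X_0$ associated to $L=\scro(C)$, i.e.\ $c_1(\frs)=2[C]-K_{\tilde X_0}$. A quick dimension count gives $d(\frs)=L^2-L\cdot K=-2$, so over the two-parameter base $S^2$ the parametric moduli space has expected dimension zero and defines an integer $\fsw_{S^2}(\pi,\frs)$. Witten's K\"ahler formula, applied on each cap and combined with the standard $\dbar$-cohomology description of solutions, identifies
\[
\fsw_{S^2}(\pi,\frs)\ =\ \pm\bigl(p_g(\tilde X_0)-h^0(\scro(K_{\tilde X_0}-C))\bigr),
\]
which is nonzero by \eqref{eq:my-assumptions}. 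On the other hand, over the equatorial region $\hat\omega$ is literally constant, so a Taubes-style construction of SW solutions from $\omega$ together with a chamber-independence argument for $b^+>1$ forces $\fsw_{S^2}(\pi,\frs)=0$. This contradiction completes the proof.

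\emph{Main obstacle.} The crux is Step 3: computing the parametric SW invariant for a family whose relative K\"ahler class sweeps across the wall $\{[\omega]\cdot c_1(\frs)=0\}\subset H^2(\tilde X_0;\rr)$, and matching the resulting wall-crossing count with the quantity $p_g-h^0(\scro(K-C))$. This requires a careful family Dirac-operator analysis for the line bundle $\scro(C)$ over each K\"ahler cap, a verification that the flop loop is precisely the loop that interpolates between the two K\"ahler chambers cut out by $\tilde\calx^{\pm}$, and a matching of signs and orientations. Equally, one must justify the vanishing of $\fsw_{S^2}$ on the symplectic side, which ultimately rests on Taubes's construction of canonical SW solutions from $\omega$ in the parametric setting.
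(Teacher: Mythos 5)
Your overall strategy---realize the flop as a loop of diffeomorphisms coming from the two small resolutions, then obstruct a symplectic representative by a family Seiberg--Witten computation---is the paper's strategy, but two of your steps contain genuine errors. In Step 1 the total space $\calx$ is smooth: the local model $f(x_1,x_2,x_3)=x_1^2+x_2^2+x_3^2$ describes a singularity of the map $f$ (equivalently, of the fibre $X_0$), not of the threefold, so $\calx$ has no ordinary double point and admits no pair of small resolutions. One must first perform the degree-two base change $t\mapsto t^2$, producing the nodal threefold $\caln=\{f(x)=t^2\}$; it is $\caln$ whose node has two small resolutions $\calv,\calv'$, and the flop loop is the clutching function of $\calw=\calv\cup\calv'$ over $S^2$. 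This is repairable, but as written your loop does not exist. You also never record the homological effect of the flop, $A\mapsto A+(A,C)C$, hence $[C]\mapsto-[C]$; this is not a footnote but the hinge of the argument, since it determines which spin$^{\cc}$ structure each cap contributes to.

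Step 3 does not close the contradiction. (a) You use only the spin$^{\cc}$ structure with $\varepsilon=[C]$. Since $\omega$ is an arbitrary symplectic form, $[C]\cdot[\omega]$ may be positive (it is whenever $\omega$ is K\"ahler), and Taubes' vanishing theorem requires $\varepsilon\cdot[\omega]\le 0$; your claimed vanishing ``over the equatorial region'' is therefore not available for this $\varepsilon$. The paper computes the invariants for both $\varepsilon=\pm[C]$, finds both equal to $1$ (each cap contributing one transverse point, to \emph{different} structures because $[C]\mapsto-[C]$---note that if both points fed the same structure the $\zz_2$ count would be $0$), and then uses that for any $\omega$ at least one of $\pm[C]$ pairs non-positively with $[\omega]$. (b) The formula $\fsw=\pm(p_g-h^0(\scro(K-C)))$ is asserted, not derived, and is not how the hypothesis enters. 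The cap moduli space is the single point given by the divisor $C$; the role of $p_g>h^0(\scro(K-C))$ is to force $h^1(\scro_{X_0}(C))=0$, so that the obstruction space $\calh^2\cong\ker\bigl(H^0(K)^*\to H^0(K-C)^*\bigr)$ is exactly $\cc$ and the derivative of the monopole map in the base direction, computed from the Kodaira--Spencer class via $[\dot{F}^{0,2}_B]\neq 0$, is onto---i.e.\ the point is cut out transversally. (c) ``Wall-crossing/chamber-independence for $b^+>1$'' is the wrong framework: over a two-dimensional base one needs $b^+>3$ to exclude reducibles in generic families, which is exactly where $p_g>1$ is used and which your proposal never invokes.
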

To give an example of a resolution $\tilde{X}_0$ which agrees with 
\eqref{eq:my-assumptions}, we consider a smooth quintic surface in $\cp^3$. 
The geometric genus of a quintic 
can be computed as follows: any canonical divisor of a quintic corresponds 
to a hyperplane section, hence 
$p_g = \dim_{\cc}\,\cp^3 + 1$. Inside the projective space $\cp^{N}$ which parameterizes 
all quintics, there is a codimension-1 locus $\Sigma$ which parameterizes singular quintics. 
Each smooth point of $\Sigma$ corresponds to a quintic with a single double-point singularity. 
Let $\Delta$ be a small complex disk in $\cp^{N}$ that intersects $\Sigma$ transversally at a smooth point $p \in \Sigma$, and 
let $t \colon \Delta \to \cc$ be a local parameter on 
$\Delta$ such that $t(p) = 0$. 
Denote by $X_t$ the quintic corresponding to the point $t \in \Delta$.
Then, the family $\left\{ X_t \right\}_{t \in \Delta}$ gives a global smoothing of $X_0$. 
Let $q \colon \tilde{X}_0 \to X_0$ be the minimal 
resolution of $X_0$, and let $C$ the exceptional $(-2)$-curve. 
Recall that $\pi_1(X_t) = 0$. Both $\pi_1$ and $p_g$ are diffeomorphism invariants. Since $X_t$ is diffeomorphic to $\tilde{X}_0$, it follows that 
$\tilde{X}_0$ satisfies the first two conditions of \eqref{eq:my-assumptions}. 
Let us check that $\tilde{X}_0$ obeys the third condition: Consider a hyperplane $h \subset \cp^3$ which does not pass through the singular point of $X_0$. The intersection 
$H = X_0 \cap h$ is a canonical divisor of $X_0$. 
The minimal resolution being crepant 
($K_{\tilde{X}_0} = q^{*} K_{X_0}$), the divisor 
$q^{*}H$ is canonical. Since $q^{*}H$ is disjoint from $C$, the third inequality of \eqref{eq:my-assumptions} follows.
\smallskip%

To construct a loop in $\diff(\tilde{X}_0)$ that is not represented 
by a loop in $\symp(\tilde{X}_0,\omega)$, we use the Atiyah flop, a birational surgery introduced in \cite{Atia}. 
Consider the ramified double covering of $\calx$:
\begin{equation}\label{eq:branch}
\caln  = \left\{ (t,x) \in \Delta \times \calx\,|\, f(x) = t^2 \right\}
\end{equation}
The 3-fold $\caln$ fibers over $\Delta$ via the map 
$f_{\caln}(t,x) = t$. If $\sq \colon \Delta \to \Delta$ is given by $\sq(t) = t^2$, then $\caln$ corresponds to the base change
\begin{equation}
\begin{tikzcd}\label{d:base-change}
\caln \arrow{r}{} \arrow{d}{f_{\caln}} & \calx \arrow{d}{f}\\
\Delta \arrow{r}{\sq} & \Delta,
\end{tikzcd}
\end{equation}
where the upper horizontal arrow is the covering map given by $(t,x) = x$. 
The 3-fold $\caln$ is a nodal 3-fold in the sense of \cite{Atia}, with a 
single double-point in the fiber over $0$. 
Atiyah shows (see \cite[\S 2]{Atia}) that there exists 
a resolution $r \colon \calv \to \caln$ which replaces the double-point 
by a smooth rational $(-1,-1)$-curve $C$, that is, a curve whose normal bundle 
is $\scro(-1) \oplus \scro(-1)$. Define $p \colon \calv \to \Delta$ by the 
diagram:
\begin{equation}
\begin{tikzcd}[column sep=small]
\calv \arrow{rr}{r} \arrow{dr}[swap]{p} & & \caln \arrow{dl}{f_{\caln}}\\
& \Delta &
\end{tikzcd},
\end{equation}
Atiyah proves 
(see \cite[\S 3]{Atia}) that $p$ has maximal rank everywhere. 
Hence, $\calv \xrightarrow{p} \Delta$ 
is a holomorphic fiber bundle. In particular, 
for each $t \in \Delta$, the fiber $p^{-1}(t)$ is diffeomorphic 
to $p^{-1}(0)$. On the other hand, the restriction 
of $r$ to the fiber $p^{-1}(0)$ gives the minimal 
resolution
\[
r|_{p^{-1}(0)} \colon p^{-1}(0) \to f_{\caln}^{-1}(0) = f^{-1}(0).
\] 
Thus, for the surface  $X_0 = f^{-1}(0)$, one 
can form a smoothing $X_t = f^{-1}(t)$ and the 
minimal resolution $\tilde{X}_0$, and those two are diffeomorphic. 
Detailed proofs of these results are given in \cite{Atia}. 
The resolution $r \colon \calv \to \caln$ is called a small 
resolution of $\caln$ because the double-point of $\caln$ is 
replaced by a curve, a codimension $2$ 
subvariety. It is well known that this process can 
be done in two different 
ways, so we obtain two different 
families of smooth surfaces, $\calv$ and 
$\calv'$, which coincide outside their central fibers.
More precisely, we have:
\begin{theorem}[Burns-Rapoport, \cite{B-R}]\label{thm:burns}
Let $p \colon \calv \to \Delta$ be a 
holomorphic family of smooth complex surfaces. 
For each $t \in \Delta$, set $X_t = p^{-1}(t)$. 
If $X_0$ contains a smooth 
rational $(-2)$-curve $C$ which is embedded in $\calv$ as a $(-1,-1)$-curve, then 
there exists another holomorphic family $p' \colon \calv' \to \Delta$, 
$X^{'}_t = p'^{-1}(t)$, and a birational isomorphism
\[
\rho_{C} \colon \calv \to \calv',
\]
whose indeterminacy locus is $C$, that fits into a diagram
\begin{equation}
\begin{tikzcd}[column sep=small]
\calv \arrow[dashed]{rr}{\rho_C} \arrow{dr}[swap]{p} & & \calv' \arrow{dl}{p'}\\
& \Delta &
\end{tikzcd},
\end{equation}
where the dashed arrow indicates that $\rho_C$ is 
not a map but merely a birational map. 
Although one cannot extend $\rho_C$ into the
curve $C$ to make it a 
proper isomorphism, one can restrict it 
to $X_0$ to obtain 
a birational isomorphism 
\[
\rho \colon X_0 \to X'_0,
\]
which then extends to a proper isomorphism between 
$X_0$ and $X'_0$. Under that isomorphism, 
the image $C' = \rho(C)$ is also a smooth rational curve which 
is embedded in $\calv'$ as a $(-1,-1)$-curve. 
\smallskip%

The family 
$p \colon \calv \to \Delta$ is 
differentiably trivial, so it provides an identification
$\alpha \colon H_2(X_0;\zz) \to H_2(X_t;\zz)$,
where $t \neq 0$ is some fixed base-point. Similarly, 
we have another identification 
$\alpha' \colon H_2(X'_0;\zz) \to H_2(X'_t;\zz)$,
corresponding to the family $p' \colon \calv' \to \Delta$.
If we identify $H_2(X_0;\zz)$ and 
$H_2(X'_0;\zz)$ via the diagram
\begin{equation}
\begin{tikzcd}\label{d:homology-ident}
H_2(X_0;\zz) \arrow{d}{\alpha} \rar{}  & H_2(X'_0;\zz) \arrow{d}{\alpha'} \\
H_2(X_t;\zz) \arrow{r}{\rho_{C *}} & H_2(X'_t;\zz)\,,
\end{tikzcd}
\end{equation}
then the formula 
for $\rho_{*} \colon H_2(X_0;\zz) \to H_2(X'_0;\zz)$ is 
as follows:
\begin{equation}\label{eq:reid}
    A \to A + (A,C)C\quad \text{for each $A \in H_2(X_0;\zz)$,}
\end{equation}
where $(A,C)$ stands for the intersection pairing.
\end{theorem}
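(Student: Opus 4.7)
The plan is to build $\calv'$ as the alternative small resolution of the nodal three-fold obtained by contracting $C$ in $\calv$, to verify that $\calv' \to \Delta$ is a smooth family whose central fiber is canonically isomorphic to $X_0$, and finally to derive the reflection formula by a local analysis of the conifold.

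\textbf{Construction of the flopped family.} Because $C$ has normal bundle $\scro(-1) \oplus \scro(-1)$ in $\calv$, Grauert's contraction criterion produces a proper birational morphism $c \colon \calv \to \bar\calv$ that contracts $C$ to a single point $q$ and is a biholomorphism on the complement. The point $q$ is an ordinary double point of $\bar\calv$, locally modelled on the affine quadric $\{xy = zw\} \subset \cc^4$, and such a conifold singularity has exactly two small resolutions, differing by the flop. One is $c$, and we take $c' \colon \calv' \to \bar\calv$ to be the other. By construction $C' := (c')^{-1}(q) \subset \calv'$ is a smooth $\pp^1$ with normal bundle $\scro(-1) \oplus \scro(-1)$, and $\rho_C := (c')^{-1} \circ c$ is a birational map $\calv \dashrightarrow \calv'$ with indeterminacy exactly along $C$. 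Since $p$ is constant on $C$, it descends through $c$ to a holomorphic map $\bar p \colon \bar\calv \to \Delta$; then $p' := \bar p \circ c' \colon \calv' \to \Delta$ is again a holomorphic family of smooth surfaces, smoothness at points of $C'$ being the standard property of a small resolution of a conifold. Contractibility of $\Delta$ provides a differentiable trivialization $\alpha'$ of $p'$.

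\textbf{Central fibers and the homology formula.} The restriction $c|_{X_0} \colon X_0 \to \bar X_0 := c(X_0)$ contracts the $(-2)$-curve $C$ to an $A_1$-singularity, and similarly $c'|_{X'_0}$ contracts $C'$. Since the minimal resolution of a surface rational double point is unique up to canonical isomorphism, there is a unique isomorphism $\rho \colon X_0 \to X'_0$ over $\bar X_0$, carrying $C$ biholomorphically onto $C'$; on $X_0 \setminus C$ this $\rho$ coincides with $\rho_C$, so it supplies the claimed proper extension. For the homology formula, fix a small $t \neq 0$ and let $V_t \subset X_t$, $V'_t \subset X'_t$ be the Lagrangian vanishing 2-spheres for the respective smoothings of $\bar X_0$ along the segment $[0,t]$. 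Picard-Lefschetz theory yields $\alpha([C]) = [V_t]$ and $\alpha'([C']) = [V'_t]$ (for suitable orientations), while an explicit computation in the conifold model shows that the biholomorphism $\rho_{C,t} \colon X_t \to X'_t$ carries $V_t$ to $V'_t$ with reversed orientation. Hence the diagrammatic identification $\bar\rho := (\alpha')^{-1} \circ \rho_{C,t,*} \circ \alpha$ satisfies $\bar\rho([C]) = -[C']$, whereas $\rho_*([C]) = [C']$. For any class $A \in H_2(X_0;\zz)$ with $(A,C) = 0$, a transversality/cancellation argument produces a cycle representative of $A$ disjoint from $C$; on such classes $\bar\rho$ and $\rho_*$ agree, since both are induced by biholomorphisms that are the identity away from $C$. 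The automorphism $(\bar\rho)^{-1} \circ \rho_*$ of $H_2(X_0;\zz)$ therefore fixes $C^\perp$ pointwise and sends $[C]$ to $-[C]$, and this is precisely the reflection $A \mapsto A + (A,C) C$.

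\textbf{Main obstacle.} The crucial technical point is the sign computation in the second paragraph: identifying the vanishing thimbles $V_t$, $V'_t$ in the local conifold model $\{xy = zw\}$ and verifying that the flop $\rho_{C,t}$ reverses their relative orientation. Once this sign is pinned down, the rest of the homology formula follows by linearity, and the other assertions of the theorem reduce to standard facts about small resolutions of conifolds and minimal resolutions of $A_1$-singularities on surfaces.
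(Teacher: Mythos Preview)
The paper does not give its own proof of this theorem: immediately after the statement it attributes the result to Burns--Rapoport and remarks that, in the form stated, it is ``a mix of Corollary~2 of Morrison's paper \cite{Mor} and Theorem~(6.3) of \cite{Reid},'' noting only that the flop ``has an explicit description in terms of blowups and blowdowns.'' So there is nothing in the paper to compare your argument against; you have supplied a proof where the paper simply cites one.

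That said, your sketch follows exactly the standard route taken in the references the paper cites: contract the $(-1,-1)$-curve to an ordinary double point (Grauert), take the second small resolution of the resulting conifold, and read off the reflection formula from the local model. Your handling of the homology computation is correct; in particular your composite $(\bar\rho)^{-1}\circ\rho_*$ is precisely the map the paper later uses (in the proof of the main theorem) when it writes ``we apply formula~\eqref{eq:reid} to get $[C]=-[C']$,'' so your interpretation of which map is the reflection matches the paper's usage. The one step you flag as the ``main obstacle'' --- that the two differentiable trivializations $\alpha,\alpha'$ send $[C],[C']$ to \emph{oppositely} oriented vanishing spheres in $X_t$ --- is indeed the only nontrivial point, and it is exactly what Reid's local blowup/blowdown description establishes. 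Your argument for agreement on $C^\perp$ (choose cycle representatives disjoint from a neighbourhood of $C$, where $\rho_C$ is a biholomorphism and the two trivializations can be taken to coincide) is the right one and needs only the remark that such representatives exist because $(A,C)=0$ lets one cancel intersection points in pairs.
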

The birational map $\rho_C$ is called the elementary 
modification of $\left\{ X_t \right\}_{t \in \Delta}$ 
with the center $C$ or the Atiyah flop. It has an explicit 
description in terms of blowups and 
blowdowns (see, eg, Reid's paper \cite[Part II]{Reid}). 
This theorem is due to Burns-Rapoport; in the present form, however, it is 
a mix of Corollary 2 of Morrison's paper \cite{Mor} and 
Theorem (6.3) of \cite{Reid}.
\smallskip%

Now, let us glue the spaces 
$\calv$ and $\calv'$ along their boundaries to form a space
\begin{equation}\label{eq:make-W}
\calw = \calv \cup \calv'/ \sim \quad 
\text{$(t,x) \in \calv \sim (t,x') \in \calv$ iff $\rho_C(x) = x'$ and $t \in \del \Delta$.}
\end{equation}
Two copies of $\Delta$, glued together by the identity map along their boundaries, 
form a 2-sphere $S^2 = \Delta \cup_{\id} \Delta$. The manifold 
$\calw$ is a fiber bundle over $S^2$ under the projection map
\[
p^{\cup} \colon \calw \to S^2,\quad 
p^{\cup}(t,x) = 
\begin{cases}
      p(t,x) & \text{for $(t,x) \in \calv$} \\
      p'(t,x) & \text{for $(t,x) \in \calv'$.}
\end{cases}
\]
Note that the gluing of $\Delta$'s is orientation-reversing with respect to 
their natural complex orientations, so $\calw$ 
will not be 
a holomorphic bundle, but merely a bundle of complex surfaces.
\smallskip%

Using family Seiberg-Witten invariants, we will prove 
$\calw$ cannot be realized as a Hamiltonian bundle. 
The idea is to use a computation from Kronheimer's paper \cite{K} 
to show that this fiber bundle has non-vanishing Seiberg-Witten invariant; then
to argue that if the bundle admitted fiberwise cohomologous 
symplectic forms, then 
the family Seiberg-Witten invariant would have to vanish. 
This argument is in spirit close to the way 
Ruberman (see \cite{R1, R2}) constructed the first examples of self-diffeomorphisms 
of four-manifolds that are isotopic to 
the identity in the topological category but not smoothly so. A generalization 
of Ruberman's result to higher homotopy groups will appear in a joint 
work of Auckly and Ruberman \cite{R3}.
\smallskip%

Watanabe (see \cite{W})
has recently proved a related result by rather different methods.
He shows that $\pi_k ( \diff_{\text{ct}}(\rr^4) )$ 
are infinite groups for all $k \geq 1$. 
On the other hand, 
Gromov (see \cite{Gr}) proved that 
$\symp_{\text{ct}}(\rr^4,\omega_{st})$ is 
contractible. Hence, the homomorphism 
\begin{equation}\label{map:wata}
\pi_1 ( \symp_{\text{ct}}  (\rr^4,\omega_{st}) )  \to \pi_1 ( \diff_{\text{ct}}(\rr^4) )
\end{equation}
is not surjective, and nor are the 
homomorphisms for higher $\pi_k$. In addition to Watanabe's 
example, the only other example the author is aware of 
where this non-surjectivity arises is Example\,10.4.2 of \cite{McD-Sa-2}. 
Let us consider the product 
$(S^2 \times S^2, \omega_{\lambda} = (1+\lambda) \omega_{S^2} \oplus \omega_{S^2})$, 
where $0 \leq \lambda \in \rr$ and $\omega_{S^2}$ is a 
standard area form on $S^2$ with total area of $1$. 
For $\lambda = 0$, Gromov proved (see \cite{Gr}) 
that the symplectomorphism group retracts onto the isometry group 
$\zz_2 \times \SO(3) \times \SO(3)$. In partucular, 
$\pi_1( \symp(S^2 \times S^2, \omega_0) ) = \zz_2 \oplus \zz_2$. 
In \cite{McD-Sa-2}, McDuff and Salamon 
explicitly describe an element $[\psi_t] \in \pi_1(\diff(S^2 \times S^2))$ that 
is of infinite order. Hence, the homomorphism 
\begin{equation}\label{map:mcduff}
\pi_1( \symp(S^2 \times S^2, \omega_{\lambda}) ) \to 
\pi_1( \diff(S^2 \times S^2) )
\end{equation}
is not surjective for $\lambda = 0$. They also claim 
that $\pi_1(\diff(S^2 \times S^2))$ has rank at least two, for it 
contains both $[\psi_t]$ and 
its conjugate by the involution that interchanges the two $S^2$ factors.
For $\lambda > 0$, Abreu and McDuff proved (see \cite{Abreu,Ab-McD}) that
$\pi_1( \symp(S^2 \times S^2, \omega_{\lambda}) ) = \zz_2 \oplus \zz_2 \oplus \zz$. 
Hence, the homomorphism \eqref{map:mcduff} is not surjective for all $\lambda$.
\smallskip%

To the author's knowledge, Theorem \ref{thm:main} 
provides the first examples, away from the setting of rational or ruled surfaces, 
of symplectic manifolds $(X,\omega)$ for which the 
homomorphism $\pi_1( \symp(X, \omega) ) \to \pi_1( \diff(X) )$ is not surjective.

\statebf Acknowledgements.
\,The author is greatly indebted to Sewa Shevchishin for many stimulating conversations. 
The author also thanks Jianfeng Lin for pointing out an inaccuracy in an earlier draft of this work.

\section{Family Seiberg-Witten invariants}\label{sec:family}
In what follows 
we shall pass to Sobolev completions of all of the 
functional spaces encountered, not mentioning the choice 
of those completions explicitly. 
A general reference that in-depth covers the 
involved analysis is Nicolaescu's book \cite{Nic}.
\smallskip%

One starts with a closed oriented {\itshape simply-connected} $4$-manifold $X$ equipped with a Riemannian metric $g$ and a self-dual form $\eta$. After picking a spin$^{\cc}$ structure on $X$, with associated spinor bundles $W^{\pm}$ and determinant line bundle $\call$, one considers the monopole map
\begin{equation}\label{map:monopole}
\mu \colon \Gamma(W^{+}) \times \cala \to \Gamma(W^{-}) \times i \,\Omega_{+}^2(X),\qquad \mu(\varphi,A) = ( \cald^{A}\,\varphi, F^{+}_A - \sigma(\varphi) - i \eta ),  
\end{equation}
where $\varphi \in \Gamma(W^{+})$ is a self-dual spinor field, $A \in \cala$ is a $\UU(1)$-connection on $\call$, and $F^{+}_{A} \in i\,\Omega_{+}^2(X)$ 
stands for the self-dual part of the curvature form of $A$. 
Finally, $\sigma \colon \Gamma(W^{+}) \to i\,\Omega^{2}_{+}(X)$ is the squaring map. We also write $\mu_{(g,\eta)}$ when we 
want to indicate the dependence of the monopole map on the metric and perturbation. The Seiberg-Witten solution space (space of monopoles) is the zero set of the function $\mu$, while 
the solution moduli space $\scrm_{(g,\eta)}$ is the quotient of $\mu^{-1}(0)$ by the gauge group
\begin{equation}\label{eq:gauge}
\calg = \left\{ g \colon X \to S^1 \right\},
\end{equation}
which acts on $\Gamma(W^{+}) \times \cala$ as follows: 
locally, or if $X$ is simply-connected, every map $g \colon X \to S^1$ takes the form $g = e^{i f}$ for some function $f$ on $X$, and its action is given by
\begin{equation}\label{eq:gauge-action}
g \cdot (\varphi, A) = ( e^{-i f} \varphi, A + 2\, i\, \exd f ).
\end{equation}
Pick a monopole $(\varphi,A)$ and consider the differential
\begin{equation}\label{map:sw-derivative}
\exd \mu \colon T_{\Gamma(W^{+}) \times \cala}|_{(\varphi,A)} 
\to T_{\Gamma(W^{-}) \times i \,\Omega_{+}^2(X)}|_{(0,0)}\quad 
{\text{of $\mu$ at $(\varphi,A)$.}}
\end{equation}
Then, regarding the gauge action of 
$\calg$ on $(\varphi,A)$ as the map
\[
\fr{g} \colon \calg \to \Gamma(W^{+}) \times \cala, \quad 
\fr{g}(g) = g \cdot (\varphi, A),
\]
we consider the differential
\begin{equation}\label{eq:gauge-derivative}
\exd \fr{g} \colon T_{\calg}|_{f = 0} \to T_{\Gamma(W^{+}) \times \cala}|_{(\varphi,A)}\quad 
{\text{of $\fr{g}$ at $e^{i \cdot 0}$.}}
\end{equation}
Combining 
\eqref{map:sw-derivative} and \eqref{eq:gauge-derivative}, we obtain 
the complex 
\begin{equation}\label{d:main-complex}
0 \to T_{\calg}|_{f = 0} 
\to
T_{\Gamma(W^{+}) \times \cala}|_{(\varphi,A)} \to 
T_{\Gamma(W^{-}) \times i \,\Omega_{+}^2(X)}|_{(0,0)} \to 0,
\end{equation}
which, assuming the standard identifications
\[
T_{\calg}|_{f = 0} \cong i\, \Omega^{0}(X),\quad
T_{\Gamma(W^{+}) \times \cala}|_{(\varphi,A)} \cong \Gamma(W^{+}) \times i\, \Omega^1(X), \quad
T_{\Gamma(W^{-}) \times i \,\Omega_{+}^2(X)}|_{(0,0)} \cong 
\Gamma(W^{-}) \times i \,\Omega_{+}^2(X),
\]
we can write as
\begin{equation}\label{def-complex-gen}
0 \to i\, \Omega^{0}(X) \to 
\Gamma(W^{+}) \times i\, \Omega^1(X) \to 
\Gamma(W^{-}) \times i \,\Omega_{+}^2(X) \to 0.
\end{equation}
The cohomology groups of \eqref{def-complex-gen} are
\[
\calh^{0}_{(\varphi,A)} = \ker \exd \fr{g},\quad 
\calh^{1}_{(\varphi,A)} = \ker \exd \mu/\im \exd\,\fr{g},\quad 
\calh^{2}_{(\varphi,A)} = \coker \exd \mu.
\]
A monopole $(\varphi, A)$ is called reducible if 
$\varphi$ is identically zero. The stabilizer 
$(\varphi, A)$ under the action of gauge group 
is trivial unless the monopole is reducible, in which 
case the stabilizer is isomorphic to $S^1$. Thus, if 
$(\varphi,A)$ is irreducible, then $\calh^{0}_{(\varphi,A)} = 0$, and 
sequence \eqref{d:main-complex} is exact in the 
first term.
If $(\varphi,A)$ is reducible, then the Seiberg-Witten equations 
reads $F^{+}_{A} - i\eta = 0$. In that case, 
a solution only exists iff 
\begin{equation}\label{eq:reduce-1}
\langle F^{+}_{A} \rangle_{g} = i \langle \eta \rangle_{g},
\end{equation}
where the brackets in both sides denote the harmonic part of the 2-form in question. 
Since the harmonic part of $F^{+}_{A}$ depends only on the cohomology class 
of $F_{A}$, and not on the connection $A$ at hand, we may restate 
\eqref{eq:reduce-1} as
\begin{equation}\label{eq:reduce-2}
\langle \eta + 2\,\pi\,c_1(\call) \rangle_{g} = 0.
\end{equation}
Set: 
\[
\Omega_{+}^2(X)^{*} = \left\{ 
\eta \in \Omega_{+}^2(X)\,|\, \langle \eta + 2\,\pi\, c_1(\call) \rangle_{g} \neq 0
\right\}.
\]
Now, let us consider the parameterized monopole map 
\[
\mu^{\diamondsuit} \colon 
\Omega_{+}^2(X)^{*} \times \Gamma(W^{+}) \times \cala \to 
\Gamma(W^{-}) \times i \,\Omega_{+}^2(X),\qquad 
\mu^{\diamondsuit}(\eta, \varphi,A) = ( \cald^{A}\,\varphi, F^{+}_A - \sigma(\varphi) - i \eta ),  
\]
which we can regard as a family of monopoles maps parameterized 
by $\Omega_{+}^2(X)^{*}$. Define the universal moduli space as:
\[
\fr{M} = \left\{ 
(\eta,\varphi,A) \in \Omega_{+}^2(X)^{*} \times \Gamma(W^{+}) \times \cala\ |\ 
\mu^{\diamondsuit}(\eta, \varphi,A) = 0
\right\}/\sim,\quad (\eta, \varphi,A) \sim 
(\eta, g \cdot (\varphi,A))\ 
\text{for all $g \in \calg$.}
\]
Set:
\[
\pi \colon \fr{M} \to \Omega_{+}^2(X)^{*},\quad 
\pi(\eta,\varphi,A) = \eta
\]
For the proof of the following statement see \cite[Lem.\,5]{K-M}.
\begin{theorem}[Kronheimer-Mrowka, \cite{K-M}]\label{thm:km-transverse}
The map $\mu^{\diamondsuit}$ 
is transverse to the origin of $\Gamma(W^{-}) \times i \,\Omega_{+}^2(X)$, 
hence $\fr{M}$ is an infinite-dimensional manifold. The projection 
$\pi$ is a proper Fredholm map of index 
\[
d(\call) = \dfrac{1}{4}\left( c_1(\call)^2 - 2\,\chi(X) - 3\,\sigma(X) \right),
\]
and for each point $(\eta,\varphi,A) \in \fr{M}$, 
there are natural isomorphisms
\[
\ker \exd \pi|_{(\eta,\varphi,A)} = \calh^1_{(\varphi,A)},\quad
\coker \exd \pi|_{(\eta,\varphi,A)} = \calh^2_{(\varphi,A)}.
\]
\end{theorem}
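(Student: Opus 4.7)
The plan is to establish, in sequence: transversality of $\mu^{\diamondsuit}$, the manifold structure of $\fr{M}$, properness and Fredholmness of $\pi$, the index formula, and the kernel/cokernel identifications. For transversality, I would show $\exd \mu^{\diamondsuit}$ is surjective at every zero $(\eta,\varphi,A)$. The parameter $\eta$ enters linearly in the second component of $\mu^{\diamondsuit}$ as $-i\eta$, so variation in $\eta$ alone surjects onto $i\,\Omega^2_+(X)$. It therefore suffices to show that variation of $(\varphi, A)$ with $\eta$ fixed surjects onto $\Gamma(W^-)$. Because $\eta \in \Omega^2_+(X)^{*}$, condition \eqref{eq:reduce-2} fails, so $(\varphi,A)$ cannot be reducible, \ie\ $\varphi \not\equiv 0$. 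A cokernel element $\psi \in \Gamma(W^-)$ would be $L^2$-orthogonal to the image of $(v,a) \mapsto \cald^A v + \rho(a)\varphi$; testing against $a=0$ forces $\cald^A \psi = 0$, while testing against arbitrary $a$ gives the pointwise Clifford-algebraic identity $\langle \rho(a)\varphi,\psi\rangle \equiv 0$, which forces $\psi$ to vanish on $\{\varphi \neq 0\}$. Aronszajn's unique continuation theorem for the Dirac operator then propagates this to $\psi \equiv 0$. I expect this unique-continuation step to be the main subtle point.

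Transversality combined with the free action of $\calg$ on irreducibles and the implicit function theorem for Banach manifolds then yields that $\fr{M}$ is a smooth Banach manifold. The linearization of $\pi$ at $(\eta,\varphi,A)$ is, modulo the gauge action, the deformation operator of the complex \eqref{def-complex-gen}; its principal symbol is that of $\cald^A \oplus (d^0 \oplus d^+)$, hence it is elliptic and therefore Fredholm between the chosen Sobolev completions. Properness of $\pi$ is the standard \emph{a priori} bound: the Weitzenb{\"o}ck formula gives a uniform $L^\infty$-bound on $\varphi$ in terms of $\|\eta\|_{L^\infty}$ and the scalar curvature of $g$, which then controls $F_A^+$, and a Coulomb gauge-fixing controls $A$ in any Sobolev norm. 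Together these give sequential compactness of $\pi^{-1}(K)$ for any compact $K \subset \Omega^2_+(X)^{*}$.

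The index of $\exd \pi$ equals the Euler characteristic of the elliptic complex \eqref{def-complex-gen}; applying the Atiyah-Singer index theorem to its principal symbol—the sum of the Dirac symbol and the symbol of $d^0 \oplus d^+$—and simplifying yields
\[
d(\call) = \frac{1}{4}\bigl( c_1(\call)^2 - 2\,\chi(X) - 3\,\sigma(X) \bigr).
\]
For the final claim, a snake-lemma chase on the commutative diagram relating the tangent spaces $T_{(\eta,\varphi,A)}\fr{M}$, $T_\eta\,\Omega^2_+(X)^{*}$, and the terms of \eqref{def-complex-gen} identifies $\ker \exd\pi$ with the space of first-order deformations of $(\varphi,A)$ modulo gauge with $\eta$ held fixed, namely $\calh^1_{(\varphi,A)}$; dually, $\coker \exd\pi$ corresponds to the obstruction space $\calh^2_{(\varphi,A)}$. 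This is an unraveling of the definitions once transversality is in hand.
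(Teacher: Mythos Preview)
The paper does not give its own proof of this theorem: it is stated as a result of Kronheimer--Mrowka and the reader is referred to \cite[Lem.\,5]{K-M} for the argument. Your outline is correct and is precisely the standard proof one finds there---variation in $\eta$ to cover the $i\,\Omega^2_+$ factor, irreducibility plus unique continuation for the Dirac operator to cover $\Gamma(W^-)$, Weitzenb\"ock bounds for properness, Atiyah--Singer for the index, and a diagram chase for the kernel/cokernel identifications---so there is nothing to compare.
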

\smallskip%

Following Li-Liu \cite{LL}, we now consider the monopole map 
in the more general setting of fiber bundles.
Let $B$ be a finite-dimensional manifold, 
and $\calx \to B$ be a smooth bundle over $B$ 
with fiber $X$. 
We denote the vertical tangent bundle of $\calx$ 
by $T_{\calx/B}$. 
Pick a metric on $T_{\calx/B}$, and consider the bundle 
$\fr{fr}$ of orienting orthonormal frames of $T_{\calx/B}$. 
Suppose that $T_{\calx/B}$ is given a spin$^{\cc}$ structure 
$\fr{s}$, an equivalence class of lifts of the $\SO(4)$-bundle $\fr{fr}$ 
to a $\mib{Spin}^{\cc}(4)$-bundle $\fr{s}$. Then, if we restrict $\fr{s}$ to a fiber 
$X_b$ at $b \in B$, we get a spin$^{\cc}$ structure $\fr{s}_b$ on $X_b$.
(Hereafter, given any object on the total space $\calx$, 
the object with the subscript $b$ stands for the restriction to the fiber $X_b$.) Conversely, suppose a fiber $X_b$ is given a spin$^{\cc}$ structure $\fr{s}_b$ and we want to decide whether we can extend $\fr{s}_b$ to some spin$^{\cc}$ structure $\fr{s}$ on $T_{\calx/B}$. 
The following well-known result 
(see, eg, Chapter 3 in Morgan's book \cite{Morg}) answers affirmatively this question, provided that 
$X$ is simply-connected and $B$ is a homotopy $S^2$. (This is the only case we will be considering in the sequel.)
\begin{lemma}\label{spin-extend}
Let $\calx \xrightarrow{X_b} B$ be a fiber bundle whose fiber $X_b$ 
is a smooth simply-connected manifold, and whose base $B$ 
is a homotopy $S^2$. Suppose we 
are given a spin$^{\cc}$ structure $\fr{s}_b$ on $X_b$. 
Then there exists a spin$^{\cc}$ structure $\fr{s}$ on $T_{\calx/B}$ extending 
the spin$^{\cc}$ structure $\fr{s}_b$ on $X_b$. 
\end{lemma}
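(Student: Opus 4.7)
The plan is to reduce the lemma to obstruction theory and feed that into the Leray--Serre spectral sequence of the fibration $X_b \hookrightarrow \calx \to B$. Recall that a spin$^{\cc}$ structure on an oriented rank-$4$ real vector bundle $E \to Y$ exists if and only if the integral third Stiefel--Whitney class $W_3(E) = \beta(w_2(E)) \in H^3(Y;\zz)$ vanishes, and, when non-empty, the set of such structures is naturally a torsor over $H^2(Y;\zz)$, with the torsor action commuting with pullback. Applying this to the vertical tangent bundle $T_{\calx/B} \to \calx$, my goal is first to show $W_3(T_{\calx/B}) = 0$, and then to use the torsor action to arrange that the resulting spin$^{\cc}$ structure restricts to the prescribed $\fr{s}_b$.

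To control both $H^3(\calx;\zz)$ and the restriction map in degree $2$, I would invoke the Leray--Serre spectral sequence $E_2^{p,q} = H^p(B;H^q(X_b;\zz)) \Rightarrow H^{p+q}(\calx;\zz)$. Since $B$ is a homotopy $S^2$ and hence simply-connected, the local coefficient system on $B$ is trivial, so $E_2^{p,q} = H^p(B;\zz) \otimes H^q(X_b;\zz)$. Because $H^p(B;\zz) = 0$ for $p \neq 0,2$ and $X_b$ is simply-connected (so $H^1(X_b;\zz) = 0$), the $E_2$ page is concentrated in columns $p = 0$ and $p = 2$ and has a vanishing row $q = 1$. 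From this I would extract two facts: (a) the edge homomorphism $H^2(\calx;\zz) \to H^2(X_b;\zz)$ is surjective, since the only potentially nontrivial differential out of $E_2^{0,2}$ lands in $E_2^{2,1} = 0$; and (b) the edge homomorphism $H^3(\calx;\zz) \to H^3(X_b;\zz)$ is injective, because the filtration quotients $E_\infty^{1,2}$, $E_\infty^{2,1}$, $E_\infty^{3,0}$ of its kernel all vanish identically at $E_2$ already.

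With (a) and (b) the argument closes quickly. By naturality of $W_3$ under pullback, $W_3(T_{\calx/B})\big|_{X_b} = W_3(TX_b)$, and the latter is zero because $X_b$ carries the spin$^{\cc}$ structure $\fr{s}_b$. By (b), $W_3(T_{\calx/B})$ itself must vanish in $H^3(\calx;\zz)$, so \emph{some} spin$^{\cc}$ structure $\fr{s}'$ on $T_{\calx/B}$ exists. Its restriction $\fr{s}'|_{X_b}$ differs from $\fr{s}_b$ by a unique class $\alpha \in H^2(X_b;\zz)$; using (a), lift $\alpha$ to $\tilde\alpha \in H^2(\calx;\zz)$ and set $\fr{s} := \fr{s}' \otimes \tilde\alpha^{-1}$ in the torsor sense. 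Then $\fr{s}|_{X_b} = \fr{s}_b$, as required.

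The main subtlety I expect is the spectral-sequence bookkeeping needed to rule out any exotic contributions to the kernel of $H^3(\calx;\zz) \to H^3(X_b;\zz)$; but once one records that $H^1(X_b;\zz) = 0$ and $H^1(B;\zz) = H^3(B;\zz) = 0$, the relevant $E_2$-entries vanish and no higher-differential analysis is needed. Beyond this, everything is formal: the $H^2(\cdot;\zz)$-torsor structure on the set of spin$^{\cc}$ structures and the naturality of $W_3$ under restriction to a submanifold are standard.
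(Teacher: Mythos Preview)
Your proof is correct and rests on the same topological input as the paper's---the Leray--Serre spectral sequence for $X_b\hookrightarrow\calx\to B$ with $\pi_1(B)=\pi_1(X_b)=0$---but you package the spin$^{\cc}$ step differently. The paper extracts only the short exact sequence $0\to H^2(B;\zz)\to H^2(\calx;\zz)\to H^2(X_b;\zz)\to 0$, lifts $c_1(\call_b)$ to a class $[S]\in H^2(\calx;\zz)$, adjusts the lift within $H^2(B;\zz)$ so that $[S]\bmod 2=w_2(T_{\calx/B})$, and then invokes the fact that on the simply-connected fiber the Chern class determines the spin$^{\cc}$ structure. You instead use the additional observation that $H^3(\calx;\zz)\to H^3(X_b;\zz)$ is injective to kill $W_3(T_{\calx/B})$ directly, and then correct the restriction via the $H^2(\,\cdot\,;\zz)$-torsor action. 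Your route is slightly cleaner: it bypasses the implicit $\zz_2$-adjustment needed to match $[S]\bmod 2$ with $w_2$, and it does not appeal to torsion-freeness of $H^2(X_b;\zz)$ to pass from ``same $c_1$'' to ``same spin$^{\cc}$ structure.'' The paper's route, on the other hand, never needs to look at $H^3$ at all. Both are standard and equally valid here.
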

\begin{proof}
Since $B$ is a homotopy $S^2$, we have a standard exact sequence:
\begin{equation}\label{exact-for-spin}
0 \to H^2(B;\zz) \to H^2(\calx;\zz) \to H^2(X_b;\zz) \to 0.
\end{equation}
Letting $\call_b$ be the determinant line bundle of $\fr{s}_b$, 
this sequence provides a lift of $c_1(\call_b) \in H^2(X_b;\zz)$ 
to a class $[S] \in H^2(\calx;\zz)$. Moreover, we can assume that 
$[S] \mod 2 = w_2(T_{\calx/B})$. Hence, there is 
a spin$^{\cc}$ structure on $T_{\calx/B}$ whose Chern class is $[S]$.
For a simply-connected manifold, the Chern class will 
distinguish any two spin$^{\cc}$ structures. Hence, the restriction of 
$\fr{s}$ to $X_b$ is the same as $\fr{s}_b$. \qed
\end{proof}
Let $\calx \xrightarrow{X_b} B$ be a fiber bundle as above and let 
$\left\{ g_b \right\}_{b \in B}$ be a family of fiberwise metrics. 
Choose a fixed spin$^{\cc}$ structure $\fr{s}$ on $T_{\calx/B}$. 
Associated to $\fr{s}$, there are spinor bundles $W^{\pm} \to B$ and 
determinant line bundle $\call = \det\,W^{+}$, which we regard as 
families of bundles
\[
W^{\pm} = \bigcup_{b \in B} W^{\pm}_{b},\quad 
\call = \bigcup_{b \in B} \call_{b}.
\]
Further, we let $\cala_{b}$ denote the space 
of $\mib{U}(1)$-connections 
on $\call_{b}$, $\calg_b$ denote the gauge groups 
acting on 
$(W^{\pm}_{b},\cala_{b})$ as stated by \eqref{eq:gauge-action}, 
and $\Omega \to B$ be the fiber bundle whose fiber $\Omega_b$ is the 
space of those $2$-forms on $X_b$ which are $g_b$-self-dual. 
Define $\fr{T} \to B$ as follows: the fiber of $\fr{T}$ 
over $b \in B$ is the space $\Gamma(W^{-}_{b}) \times i\,\Omega_{b}$. 
(Here $\fr{T}$ is for \say{target space}.) Below, we denote points 
of $\fr{T}$ by tuples $(b, \psi, i \eta)$, where $b \in B$, 
$\psi \in \Gamma(W^{-}_{b})$, $\eta \in \Omega_{b}$. Set:
\begin{equation}\label{reducible-eq}
\Omega^{*}_b = \left\{ \eta \in \Omega_{b}\ | \ 
\langle \eta + 2\,\pi\,c_1(\call_b) \rangle_{g_b} \neq 0
\right\},
\end{equation}
and let $j \colon \Omega^{*} \to B$ be the bundle whose fiber over $b \in B$ is $\Omega^{*}_b$.
\begin{lemma}\label{contract-lemma}
    The fibering $j \colon \Omega^{*} \to B$ has $(b^{+}(X) - 2)$-connected fibers. 
    If $b^{+}(X) = 1$, then those fibers consist of two connectend components, each 
    being contractible.
\end{lemma}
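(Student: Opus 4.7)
The plan is to reduce the question to a standard computation by restricting attention to the harmonic parts of the self-dual forms. Fix $b \in B$. The Hodge decomposition with respect to $g_b$ splits the space $\Omega_b$ of self-dual $2$-forms on $X_b$ as
\[
\Omega_b \;=\; \calh^{+}_{g_b} \oplus (\Omega_b)_{\perp},
\]
where $\calh^{+}_{g_b}$ is the $b^{+}(X)$-dimensional space of $g_b$-harmonic self-dual forms and $(\Omega_b)_{\perp}$ denotes its $L^2$-orthogonal complement. The harmonic-projection map $\langle \,\cdot\, \rangle_{g_b} \colon \Omega_b \to \calh^{+}_{g_b}$ is a continuous linear surjection, and its fibers are affine subspaces modeled on the contractible vector space $(\Omega_b)_{\perp}$.

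The condition defining $\Omega^{*}_b$ in \eqref{reducible-eq} depends only on the harmonic projection, so $\Omega^{*}_b$ is the preimage under $\langle \,\cdot\, \rangle_{g_b}$ of the complement of the single point $-2\pi\,\langle c_1(\call_b)\rangle_{g_b}$ in $\calh^{+}_{g_b}$. Thus $\Omega^{*}_b$ is fiber-homotopy equivalent to
\[
\calh^{+}_{g_b} \setminus \{*\} \;\simeq\; \rr^{b^{+}(X)} \setminus \{0\} \;\simeq\; S^{b^{+}(X)-1}.
\]
When $b^{+}(X) \geq 2$, the sphere $S^{b^{+}(X)-1}$ is $(b^{+}(X)-2)$-connected, and when $b^{+}(X) = 1$ it is $S^0$, consisting of two contractible components (corresponding to the two sides of the deleted point on the real line $\calh^{+}_{g_b}$). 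This gives the two claims of the lemma.

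The only step that needs care is the assertion that the fiberwise linear projection onto the harmonic subspace deformation-retracts $\Omega^{*}_b$ onto the punctured finite-dimensional space $\calh^{+}_{g_b} \setminus \{*\}$. This follows because the straight-line homotopy
\[
H_s(\eta) \;=\; \langle \eta \rangle_{g_b} + (1-s)\,(\eta - \langle \eta \rangle_{g_b}),\qquad s \in [0,1],
\]
stays inside $\Omega^{*}_b$ throughout, since it preserves the harmonic part of $\eta$; hence the condition $\langle \eta + 2\pi c_1(\call_b)\rangle_{g_b} \neq 0$ is invariant under $H_s$. This establishes the homotopy equivalence and completes the proof.
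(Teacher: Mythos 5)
Your proof is correct and is essentially the paper's argument: the paper simply observes that \eqref{reducible-eq} cuts out an affine subspace of codimension $b^{+}(X)$ in $\Omega_b$, and your Hodge-decomposition retraction onto $\calh^{+}_{g_b}\setminus\{*\}\simeq S^{b^{+}(X)-1}$ is exactly the standard justification of why the complement of such a subspace has the stated connectivity. No issues.
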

\begin{proof}
This is clear as 
equation \eqref{reducible-eq} cuts out an affine subspace of 
codimension $b^{+}(X)$ in $\Omega_b$. \qed
\end{proof}
\smallskip%

Define $\fr{D} \to B$ as follows: the fiber of $\fr{D}$ 
over $b \in B$ is the space 
$\Gamma(W^{+}_{b}) \times \cala_{b} \times \Omega_b^{*}$. (Here $\fr{D}$ is for \say{domain}.) 
Below, 
we denote points 
of $\fr{D}$ by tuples 
$([b,\eta], \varphi, A)$, where 
$b \in B$, $\eta \in \Omega_b^{*}$, 
$\varphi \in \Gamma(W^{+}_{b})$, $A \in \cala_{b}$. 
Consider an extended version of 
the parameterized monopole map,
\[
\mu^{\heartsuit} \colon \fr{D} \to \fr{T},\quad 
\mu^{\heartsuit}([b, \eta], \varphi, A) = 
(b, \mu_{(g_b,\eta)}(\varphi,A)),
\]
where $\mu_{(g_b,\eta)}(\varphi,A)$ is defined by \eqref{map:monopole} for the metric 
$g_b$ and perturbation $\eta$. 
In this family setting, 
the universal moduli space $\fr{M}$ is defined as follows:
\[
\fr{M} = \left\{ ([b, \eta], \varphi, A) \in \fr{D}\ |\ 
\mu^{\heartsuit}([b, \eta], \varphi, A) = (b, 0, 0)
\right\}/\sim,\quad (\eta, \varphi,A) \sim (\eta, g \cdot (\varphi,A) )
\]
for 
$(\eta, \varphi,A) \in \Omega_{b}^{*} \times \Gamma(W^{+}_b) \times \cala_{b}$ and 
$g \in \calg_b$. Set: 
\[
\pi \colon \fr{M} \to \Omega^{*},\quad 
\pi([b, \eta], \varphi, A) \to [b,\eta].
\]
As $\pi^{-1}([b,\eta])$ is precisely $\scrm_{(g_b,\eta)}$, 
we may regard 
$\fr{M}$ as a family of moduli spaces
\[
\fr{M} = \bigcup_{b \in B,\,\eta \in \Omega_{b}^{*}} \scrm_{(g_b,\eta)}.
\]
From Theorem \ref{thm:km-transverse}, we have:
\begin{theorem}[Li-Liu, \cite{LL}]\label{thm:l-l}
The projection $\pi$ is a proper Fredholm map of index 
\[
d(\call) = \dfrac{1}{4}( c_1(\call_b)^2 - 2\, \chi(X_b) - 3\, \sigma(X_b) ),
\]
and for each point $([b,\eta],\varphi,A) \in \fr{M}$, 
there are natural isomorphisms
\[
\ker \exd \pi|_{([b,\eta],\varphi,A)} = \calh^1_{(\varphi,A)},\quad
\coker \exd \pi|_{([b,\eta],\varphi,A)} = \calh^2_{(\varphi,A)}.
\]
\end{theorem}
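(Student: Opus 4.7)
The plan is to deduce this family statement from the fiberwise Kronheimer-Mrowka result (Theorem \ref{thm:km-transverse}) via a diagram chase that handles the extra base parameter $b \in B$. For any fixed $b \in B$, the restriction of $\mu^{\heartsuit}$ to the slice $\{b\} \times \Omega_b^* \times \Gamma(W^+_b) \times \cala_b$ coincides with the parameterized monopole map $\mu^{\diamondsuit}$ of $(X_b, g_b, \fr{s}_b)$ of Theorem \ref{thm:km-transverse}, which is transverse to the origin. A fortiori $\mu^{\heartsuit}$ itself is transverse to the zero section of $\fr{T}$: any solution may already be perturbed within its own fiber. Hence $(\mu^{\heartsuit})^{-1}(\text{zero section})$ is a Banach manifold, and after quotienting by the fiberwise gauge group $\calg_b$ (free on irreducibles, which is assured by $\eta \in \Omega_b^*$) one obtains the smooth manifold $\fr{M}$.

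For the Fredholm property, the index, and the identifications of kernel and cokernel, I would exploit the natural decompositions
\[
T_{[b,\eta]}\Omega^* = T_bB \oplus \Omega_b, \qquad T_{([b,\eta],\varphi,A)}\fr{D} = T_bB \oplus \Omega_b \oplus \Gamma(W^+_b) \oplus i\,\Omega^1(X_b),
\]
and $T\fr{T}|_{(b,0,0)} = T_bB \oplus \Gamma(W^-_b) \oplus i\,\Omega^2_{+,b}$. With respect to these, $\exd \mu^{\heartsuit}$ is the identity on the $T_bB$-summand and reproduces the Kronheimer-Mrowka differential on the remaining summands. After passing to the gauge quotient (which acts trivially on $T_bB$), the same structure holds for $\exd \pi$, now projecting onto $T_bB \oplus \Omega_b$. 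The $T_bB$-factors cancel in kernel and cokernel, leaving precisely the cohomology of the deformation complex \eqref{d:main-complex}; the index of $\exd \pi$ equals the fiberwise Kronheimer-Mrowka index $d(\call)$, which is locally constant in $b$.

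Properness is the remaining ingredient: given a sequence $([b_n,\eta_n],\varphi_n,A_n) \in \fr{M}$ with $[b_n,\eta_n]$ convergent in $\Omega^*$, the Weitzenb\"ock-type a priori estimate $|\varphi_n|_{C^0}^2 \leq \max\bigl(0,-\inf_{X_{b_n}} s_{g_{b_n}}\bigr) + C\,|\eta_n|_{C^0}$ combined with elliptic gauge-fixing yields a subsequence converging in $\fr{M}$. \textbf{Main obstacle.} Every step reduces transparently to Theorem \ref{thm:km-transverse} applied fiberwise; the one genuinely new analytic input is uniformity of these a priori bounds in $b$, which is routine once one notes that $g_b$, $\fr{s}_b$, and $s_{g_b}$ all depend continuously on $b$ and therefore remain uniformly controlled over compact subsets of $B$.
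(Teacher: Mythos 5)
Your proposal is correct and follows the same route as the paper, which itself offers no argument beyond deducing the family statement directly from the fiberwise Kronheimer--Mrowka result (Theorem \ref{thm:km-transverse}) with a citation to Li--Liu; you have simply made explicit the standard diagram chase (upper-triangular structure of $\exd\mu^{\heartsuit}$ over $T_bB$, cancellation of the base directions in kernel and cokernel, and uniformity of the Weitzenb\"ock bound over compact subsets of $B$) that the paper leaves implicit.
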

Let $\left\{ \eta_b \right\}_{b \in B}$ be a family of fiberwise 
$g_b$-self-dual forms on $\calx$ satisfying
\begin{equation}\label{regular-families}
\langle \eta_b + 2\,\pi\,c_1(\call_b) \rangle_{g_{b}} \neq 0.
\end{equation}
Applying Sard-Smale theorem \cite{Smale}, we 
perturb $\left\{ \eta_b \right\}_{b \in B}$ so that 
it is transverse to $\pi$. Then the moduli space 
\[
\fr{M}_{(g_b,\eta_b)} = \bigcup_{b \in B} \scrm_{(g_b,\eta_b)}
\]
is either empty or a compact manifold of dimension $d(\call) + \dim B$. 
Suppose $d(\call) < 0$ and suppose $B$ is a closed  manifold of dimension 
$\left( -d(\call)\right)$. Then $\fr{M}_{(g_b,\eta_b)}$ is zero-dimensional, and 
thus consists of finitely-many points. We call
\[
\fsw_{(g_{b},\eta_{b})}(\fr{s}) = 
\# \left\{ \text{points of $\fr{M}_{(g_{b},\eta_{b})}$} \right\}\,\mod\,2
\]
the family ($\zz_2$-)Seiberg-Witten invariant of $\calx$ associated to $\fr{s}$ and $\left\{(g_{b},\eta_{b})\right\}_{b \in B}$.
\smallskip%

Let $\mathrm{R}_b$ be the space of pairs $(g_b,\eta_b)$, where 
$g_b$ is a metric on $X_b$ and $\eta_b$ is a $g_b$-self-dual $2$-form, and $\mathrm{R}_b^{*}$ be the subset of $\mathrm{R}_b$ consisting of pairs 
$(g_b,\eta_b)$ that satisfy \eqref{regular-families}. 
Note that $\mathrm{R}_b^{*}$ is homotopy equivalent to $\Omega_b^*$. Let 
$\mathrm{R}^{*} \to B$ be the fiber bundle whose fiber over $b \in B$ is 
$\mathrm{R}^{*}_{b}$, and let $\Gamma(B,\mathrm{R}^{*})$ be the space of sections 
for this bundle. If $\left\{ (g_b,\eta_b) \right\}_{b \in B}$, $\left\{ (g'_{b},\eta'_{b}) \right\}_{b \in B}$ are two families that 
are in the same connected component of $\Gamma(B,\mathrm{R}^{*})$, then Sard-Smale theorem can be applied to conclude that
\[
\fsw_{(g_{b},\eta_{b})}(\fr{s}) = \fsw_{(g'_{b},\eta'_{b})}(\fr{s}).
\]
See \cite{LL} for details. It follows from Lemma \ref{contract-lemma} that 
$\mathrm{R}^{*} \to B$ has $(b^{+}(X) - 2)$-connected fibers. Hence, $\Gamma(B,\mathrm{R}^{*})$ is connected for $b^{+}(X) > \dim B + 1$.
\begin{theorem}[Li-Liu, \cite{LL}]
If $b^{+}(X) - 1 > \dim\, B$, then $\fsw_{(g_b,\eta_b)} (\fr{s})$ 
is independent of the choice of $(g_b,\eta_b)$.
\end{theorem}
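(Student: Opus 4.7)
The plan is to prove independence via a parametric cobordism argument in the space of admissible section families $\Gamma(B, \mathrm{R}^*)$.

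Given two generic families $\{(g_b, \eta_b)\}_{b \in B}$ and $\{(g'_b, \eta'_b)\}_{b \in B}$, I first invoke Lemma \ref{contract-lemma}: the bundle $\mathrm{R}^* \to B$ has $(b^+(X) - 2)$-connected fibers, and the hypothesis $b^+(X) - 1 > \dim B$ forces this connectivity to strictly exceed $\dim B$. By obstruction theory this makes $\Gamma(B, \mathrm{R}^*)$ path-connected, so I select a continuous path of section families $\{(g^s_b, \eta^s_b)\}_{s \in [0,1]}$ inside $\Gamma(B, \mathrm{R}^*)$ joining the two given ones.

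Next I enlarge the parameter base to $[0,1] \times B$, whose dimension is $-d(\call) + 1$. The universal moduli construction of Theorem \ref{thm:l-l} extends verbatim, producing a proper Fredholm projection $\pi$ of index $d(\call)$. Applying the Sard-Smale theorem, I perturb the path rel endpoints to be transverse to $\pi$; no perturbation of the endpoints is needed because they were already generic. The resulting parameterized moduli space
\[
\fr{W} = \bigcup_{s \in [0,1],\, b \in B} \scrm_{(g^s_b,\,\eta^s_b)}
\]
is then a smooth 1-manifold, compact because $\pi$ is proper and because the path stays in $\mathrm{R}^*$ throughout, which rules out sequences escaping to reducibles. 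Its boundary is precisely $\fr{M}_{(g_b, \eta_b)} \sqcup \fr{M}_{(g'_b, \eta'_b)}$, so the standard fact that a compact 1-manifold has an even number of boundary points yields
\[
\fsw_{(g_b, \eta_b)}(\fr{s}) = \fsw_{(g'_b, \eta'_b)}(\fr{s})
\]
as elements of $\zz_2$.

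The main obstacle—and the reason the hypothesis is strict—is guaranteeing path-connectedness of $\Gamma(B, \mathrm{R}^*)$. Were one merely to have $b^+(X) - 1 = \dim B$, the fiber of $\mathrm{R}^* \to B$ would be only $(\dim B - 1)$-connected, so two sections could sit in distinct path components and no cobordism through $\Gamma(B, \mathrm{R}^*)$ could relate them; this is the family analogue of the $b^+ = 1$ wall-crossing phenomenon in ordinary Seiberg-Witten theory. Under the strict inequality, however, both the cobordism exists and the Sard-Smale perturbation can be performed without pushing the path across the reducible wall, closing the argument.
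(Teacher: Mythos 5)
Your argument is correct and follows essentially the same route as the paper: path-connectedness of $\Gamma(B,\mathrm{R}^*)$ via the $(b^{+}(X)-2)$-connectivity of the fibers from Lemma \ref{contract-lemma}, followed by a Sard--Smale cobordism over $[0,1]\times B$ (the paper defers this cobordism step to Li--Liu, which you spell out). One tiny imprecision: the hypothesis gives fiber connectivity $b^{+}(X)-2\ge \dim B$, not strictly greater than $\dim B$, but $\dim B$-connectivity is exactly what obstruction theory requires to homotope one section to another, so the conclusion stands.
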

See Theorem 2.1 in \cite{LL} for 
a more general statement. We now drop the subscript $(g_b,\eta_b)$ from 
$\fsw_{(g_b,\eta_b)}(\fr{s})$ and write simply $\fsw(\fr{s})$.

\section{Seiberg-Witten for complex surfaces}\label{sec:complex}
A general reference for the Seiberg-Witten equations on K{\"a}hler surfaces is 
the book by Morgan \cite{Morg} or Nicolaescu's \cite{Nic}. Assume $X$ is a K{\"a}hler surface and $\omega$ its K{\"a}hler form with associated 
K{\"a}hler metric $g$. The complex structure 
of $X$ gives rise to a canonical 
spin$^{\cc}$ structure $\fr{s}_0$ on $X$, with determinant 
line bundle $K_{X}^{*}$, and spinor bundles
\[
W^{+} = \Lambda^{0,0} \oplus \Lambda^{0,2},\quad 
W^{-} = \Lambda^{0,1},
\]
where each term $\Lambda^{k,p}$ stands for the bundle 
of complex-valued $(k,p)$-forms on $X$. All other spin$^{\cc}$ structures 
$\fr{s}_{\varepsilon}$ on $X$ are obtained 
by taking a line bundle 
$L_{\varepsilon}$ with $c_1(L_{\varepsilon}) = \varepsilon$ and 
setting the spinor bundles to be
\begin{equation}\label{eq:spin-eps}
W^{+} = L_{\varepsilon} 
\oplus (L_{\varepsilon} \otimes \Lambda^{0,2}),\quad 
W^{-} = L_{\varepsilon} \otimes \Lambda^{0,1}.
\end{equation}
Then the determinant line bundle of $\fr{s}_{\varepsilon}$ is 
$K_{X}^{*} \otimes L_{\varepsilon}^2$. 
Reverting to the notation used in the previous section, we have
\[
\call = K_{X}^{*} \otimes L_{\varepsilon}^2, \quad 
c_1(\call) = c_1(X) + 2\,\varepsilon, \quad 
d(\call) = c_1(X) \cdot \varepsilon + \varepsilon^2.
\]
The K{\"a}hler metric $g$ induces a canonical holomorphic $\mib{U}(1)$-connection 
$A_0$ on $K^{*}_X$, and any choice of $\mib{U}(1)$-connection $B \in \calb$ on 
$L_{\varepsilon}$ combines with $A_0$ to give a connection 
$A_0 + 2\,B \in \cala$ on 
$K_{X}^{*} \otimes L_{\varepsilon}^2$. Conversely, any $\mib{U}(1)$-connection 
on $K_{X}^{*} \otimes L_{\varepsilon}^2$ is obtained that way.
\smallskip%

For a spinor $\varphi \in \Gamma(W^{+})$, we write $\varphi = (\ell,\beta)$, 
where $\ell \in \Omega^{0}(L_{\varepsilon})$ and 
$\beta \in \Omega^{0,2}(L_{\varepsilon})$. 
With this notation, the monopole map becomes 
(see, e.g., \cite[Ch.\,7]{Morg}, \cite[\S\,3.2]{Nic}):
\begin{equation}
\begin{split}
\mu \colon 
\Omega^{0}(L_{\varepsilon}) 
\oplus \Omega^{0,2}(L_{\varepsilon}) \oplus \calb 
& \to \Omega^{0,1}(L_{\varepsilon}) \oplus 
i\,\Omega^{0}(X) \omega
\oplus \Omega^{0,2}(X), \\
\mu(\ell,\beta,B) & = \left( \bar{\del}_{B} \ell + \bar{\del}^{*}_{B} \beta,\  
(F_{A}^{+})^{1,1} - \frac{i}{4}( |\ell|^2 - |\beta|^2)\omega -i \eta^{1,1} ,\  
2 F^{0,2}_{B} - \dfrac{\ell^{*} \beta}{2} - i \eta^{0,2} \right),
\end{split}
\end{equation}
where $\bar{\del}^{*} \colon 
\Omega^{0,2}(L_{\varepsilon}) \to \Omega^{0,1}(L_{\varepsilon})$ 
is the formal adjoint of 
$\bar{\del} \colon \Omega^{0,1}(L_{\varepsilon}) 
\to \Omega^{0,2}(L_{\varepsilon})$, 
$\ell^{*}$ is the image of $\ell$ under the (non-complex) isomorphism 
$L_{\varepsilon} \cong L_{\varepsilon}^{*}$ induced 
by the metric on $L_{\varepsilon}$, $\ell^{*} \beta$ is the 
image of $\ell^{*} \otimes \beta$ under the evaluation map 
$L_{\varepsilon}^{*} \otimes (L_{\varepsilon} \otimes \Lambda^{0,2}) \to 
\Lambda^{0,2}$. Here we have used the standard 
identification $i\,\Omega^2_{+}(X) \cong i\,\Omega^{0}(X) \omega \oplus \Omega^{0,2}(X)$. 
\smallskip%

Since $\cala = A_0 + 2\,\calb$, 
it is convenient to view the action of the gauge group 
$\calg$ on $W^{+}$ and $\cala$ through its action on $L_{\varepsilon}$ and $\calb$ by the formula
\[
e^{if} \cdot (\ell,B) = (e^{-if} \ell, B + i \exd f),
\]
which is chosen so to be consistent with formula \eqref{eq:gauge-action}. 
Put $\eta = -\rho^2 \omega$. Let us describe the solutions to the equation 
\begin{equation}\label{eq:kahler-sw}
\mu(\ell,\beta,B) = 0.
\end{equation}
The following two theorems are 
well-known; see, e.g., \cite[\S\,3.2]{Nic}, \cite[Ch.\,7]{Morg}.
\begin{theorem}
If $(\ell,\beta, B)$ is a solution 
to \eqref{eq:kahler-sw}, then
\[
F^{0,2}_B \equiv 0,\quad \bar{\del}_{B} \ell \equiv 0,\quad 
\bar{\del}^{*}_{B} \beta \equiv 0,
\]
and either $\ell \equiv 0$ or $\beta \equiv 0$.
\end{theorem}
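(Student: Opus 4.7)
The plan is to follow Witten's classical argument for the K{\"a}hler case, adapted to the present perturbation. The key observation is that $\eta = -\rho^{2}\omega$ is a real $(1,1)$-form, so its $(0,2)$-component $\eta^{0,2}$ vanishes identically. With that, the three components of $\mu(\ell,\beta,B)=0$ become much easier to exploit: the $(0,2)$-component determines $F_B^{0,2}$ algebraically in terms of $\ell$ and $\beta$, while the Dirac (first) component relates $\bar\partial_B\ell$ to $\bar\partial_B^{*}\beta$.

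First I would read off from the type-$(0,2)$ component that
\begin{equation*}
F_B^{0,2}=\tfrac{1}{4}\,\ell^{*}\beta .
\end{equation*}
Next I would square the Dirac equation $\bar\partial_B\ell=-\bar\partial_B^{*}\beta$ in $L^{2}$, integrate by parts using that $\bar\partial_B^{*}$ is the formal adjoint of $\bar\partial_B$, and apply the curvature identity $\bar\partial_B^{2}=F_B^{0,2}\wedge(\cdot)$ valid on sections of a line bundle:
\begin{equation*}
\|\bar\partial_B\ell\|_{L^{2}}^{2}=-\langle \bar\partial_B\ell,\bar\partial_B^{*}\beta\rangle_{L^{2}}=-\langle \bar\partial_B^{2}\ell,\beta\rangle_{L^{2}}=-\langle F_B^{0,2}\cdot\ell,\beta\rangle_{L^{2}} .
\end{equation*}
Substituting the expression for $F_B^{0,2}$ and using $(\ell^{*}\beta)\cdot\ell=|\ell|^{2}\beta$, this becomes
\begin{equation*}
\|\bar\partial_B\ell\|_{L^{2}}^{2}=-\tfrac{1}{4}\int_X |\ell|^{2}|\beta|^{2}\,\mathrm{dvol}_{g}\le 0 .
\end{equation*}
Since the left-hand side is non-negative, both sides vanish, which yields $\bar\partial_B\ell\equiv 0$ and $|\ell|^{2}|\beta|^{2}\equiv 0$ pointwise. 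The Dirac equation then gives $\bar\partial_B^{*}\beta\equiv 0$, and the $(0,2)$-equation gives $F_B^{0,2}\equiv 0$ (since $\ell^{*}\beta\equiv 0$).

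The remaining task is to upgrade the identity $\ell\cdot\beta\equiv 0$ to the dichotomy \emph{either} $\ell\equiv 0$ \emph{or} $\beta\equiv 0$. Because $F_B^{0,2}=0$, the operator $\bar\partial_B$ defines a holomorphic structure on $L_{\varepsilon}$, and $\ell$ is then a holomorphic section; its zero locus is either all of $X$ or a proper analytic subset. A dual argument applies to $\beta$: via Serre duality, $\beta$ corresponds to a holomorphic section of $L_{\varepsilon}^{*}\otimes K_X$, so the same alternative holds. Connectedness of $X$ together with $|\ell|^{2}|\beta|^{2}\equiv 0$ then forces one of $\ell,\beta$ to vanish identically.

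The main obstacle is not conceptual but bookkeeping: pinning down the exact constant in $F_B^{0,2}=\tfrac{1}{4}\ell^{*}\beta$, verifying that the integration-by-parts step produces the correct non-positive sign, and checking that the curvature identity $\bar\partial_B^{2}=F_B^{0,2}\wedge(\cdot)$ is applied with the correct conventions. Once those sign conventions are settled, the rest of the argument is purely formal and follows the blueprint in \cite[Ch.\,7]{Morg} or \cite[\S\,3.2]{Nic}.
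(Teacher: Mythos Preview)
Your argument is correct and is precisely Witten's classical computation; the paper does not give its own proof of this theorem but simply cites \cite[Ch.\,7]{Morg} and \cite[\S\,3.2]{Nic}, whose argument you have reproduced. The only point worth flagging is that the dichotomy step uses $F_B^{0,2}=0$ to endow $L_\varepsilon$ with a holomorphic structure, after which both $\ell$ and $*\beta$ are holomorphic sections of line bundles---this is exactly how the paper phrases the conclusion immediately after the theorem statement.
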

The equality $F^{0,2}_B \equiv 0$ 
says that $B$ is a holomorphic connection 
on $L_{\varepsilon}$, while $\bar{\del}_{B} \ell \equiv 0$ implies 
that $\ell$ is a holomorphic section of $L_{\varepsilon}$. 
Similarly, $\bar{\del}^{*}_{B} \beta = 0$ 
says that the Hodge dual $*\beta \in 
\Gamma(L_{\varepsilon}^{*} \otimes \Lambda^{2,0})$ of $\beta$ is 
a holomorphic section of 
$L_{\varepsilon}^{*} \otimes K_{X}$.
\begin{theorem}\label{thm:witten}
Let $(\ell,\beta,B)$ be a solution 
to \eqref{eq:kahler-sw} for a large enough $\rho$. Then
\begin{enumerate}[label=\normalfont{(\alph*)}]
\item $\beta \equiv 0$.
\smallskip%

\item $\ell$ does not vanish identically.
\end{enumerate}
Conversely, if $\ell$ is 
a section of $L_{\varepsilon}$ that does not vanish identically on $X$, 
then the equation \eqref{eq:kahler-sw} admits a solution $(\ell,0,B)$. 
Furthermore, there is a one-to-one correspondence between the 
points of $\scrm_{(g,-\rho^2\,\omega)}$ and 
effective divisors in the class $\varepsilon$.
\end{theorem}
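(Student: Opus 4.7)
The plan is to build on the preceding theorem, which already establishes $F_B^{0,2}\equiv 0$, $\bar\partial_B\ell\equiv 0$, $\bar\partial_B^*\beta\equiv 0$, and the alternative that one of $\ell$ or $\beta$ vanishes identically. Granting all of this, the content of parts \emph{(a)} and \emph{(b)} reduces to ruling out $\ell\equiv 0$ for $\rho\gg 0$, and the converse is then a matter of producing a connection $B$ by Hodge theory.

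The heart of the argument is an integrated identity obtained by wedging the second equation
\[
(F_A^+)^{1,1} \;=\; \tfrac{i}{4}(|\ell|^2-|\beta|^2)\,\omega \;-\; i\rho^2\,\omega
\]
with the Kähler form and integrating over $X$. Since $\omega$ annihilates primitive $(1,1)$-forms and only the $(1,1)$-part of $F_A$ contributes to the pairing with $\omega$, the left-hand side evaluates to $\int_X F_A\wedge\omega = -2\pi i\, c_1(\call)\cdot [\omega]$, a topological invariant. Substituting and dividing by $i$ yields a scalar identity of the shape
\[
\|\ell\|_{L^2}^2 \;-\; \|\beta\|_{L^2}^2 \;=\; 4\rho^2\,\vol(X) \;-\; 4\pi\, c_1(\call)\cdot [\omega].
\]
For $\rho$ sufficiently large the right-hand side is strictly positive and in fact grows like $\rho^2$, so $\ell$ cannot be identically zero. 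Combined with the dichotomy from the previous theorem, this simultaneously forces $\beta\equiv 0$ and $\ell\not\equiv 0$, proving \emph{(a)} and \emph{(b)}.

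For the converse, starting from an effective divisor $D$ in the class $\varepsilon$, take $L_\varepsilon=\scro_X(D)$ together with its canonical holomorphic section $s_D$; set $\beta=0$ and $\ell=c\cdot s_D$ for a constant $c>0$ to be determined. The first and third components of the monopole equation are automatic as soon as $B$ induces the holomorphic structure of $L_\varepsilon$; the remaining second component, traced with $\omega$, reads
\[
\Lambda F_B \;=\; \tfrac{i}{2}|\ell|^2 \;-\; 2i\rho^2 \;-\; \tfrac{1}{2}\Lambda F_{A_0}.
\]
Integrating this over $X$ reproduces exactly the scalar identity derived above and uniquely fixes the scale $c$. With $c$ so chosen, what remains is to find a holomorphic $\mib{U}(1)$-connection $B$ on $L_\varepsilon$ whose curvature has a prescribed imaginary-valued trace of specified mean, which is solvable by the $\partial\bar\partial$-lemma on the simply-connected Kähler surface $X$ and is unique modulo gauge. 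Passing to $\scrm_{(g,-\rho^2\omega)}$ and noting that the gauge-equivalence class of $(\ell,0,B)$ is pinned down by the zero divisor of $\ell$, which is $D$, yields the claimed bijection with effective divisors in the class $\varepsilon$.

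The main obstacle is of a bookkeeping rather than conceptual nature: one must carry out the trace computation carefully, juggling the self-dual/$(1,1)$ decomposition, the identities $\Lambda\omega=2$ and $\omega^2/2=\mathrm{dvol}$, and the normalization $c_1(\call)=[\tfrac{i}{2\pi}F_A]$, so that the single scalar constraint extracted from the monopole equation harmonizes both with the exclusion of $\ell\equiv 0$ in the forward direction and with the existence of $B$ in the converse.
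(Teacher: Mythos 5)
Your forward direction reproduces, correctly, the standard Witten argument that the paper itself does not spell out but delegates to Morgan's and Nicolaescu's books: wedging the curvature equation with $\omega$, observing that only $(F_A^{+})^{1,1}$ pairs nontrivially with $\omega$, and integrating yields
\[
\|\ell\|_{L^2}^2-\|\beta\|_{L^2}^2=4\rho^2\,\vol(X)-4\pi\,c_1(\call)\cdot[\omega],
\]
which, combined with the dichotomy $\ell\equiv 0$ or $\beta\equiv 0$ from the preceding theorem, forces $\beta\equiv 0$ and $\ell\not\equiv 0$ for $\rho\gg 0$. No complaints there.

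The converse, however, has a genuine gap at the step where you produce $B$. Once the Hermitian metric on $L_{\varepsilon}$ and the section $\ell=c\,s_D$ are fixed, the conditions $F_B^{0,2}=0$ and $\bar{\del}_B\ell=0$ leave \emph{no} freedom in $B$: writing $B=B_0+\alpha$ with $B_0$ the Chern connection and $\alpha\in i\,\Omega^1(X)$, one gets $\bar{\del}_B\ell=\alpha^{0,1}\ell=0$, hence $\alpha^{0,1}=0$ off the divisor and so everywhere, and since $\alpha$ is $i\rr$-valued this forces $\alpha=0$. There is therefore no residual linear problem of the form ``prescribe $\Lambda F_B$ with the correct mean'' to be solved by the $\del\bar{\del}$-lemma; choosing the constant $c$ only matches the integrated identity, not the pointwise scalar equation, which $B_0$ will not satisfy in general. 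The genuine freedom lies in the Hermitian metric on $L_{\varepsilon}$ (equivalently, in the action of the complexified gauge group): setting $h=e^{2u}h_0$ converts the remaining $(1,1)$-equation into an equation of Kazdan--Warner (vortex) type, schematically $\Delta u+\tfrac14 e^{2u}|s_D|_{h_0}^2=w$ with $\int_X w>0$ for large $\rho$. The existence \emph{and uniqueness} of its solution is the nonlinear-analytic heart of the converse --- uniqueness is also what pins down the gauge class by the divisor and hence gives the bijection with $\scrm_{(g,-\rho^2\omega)}$ --- and it cannot be replaced by linear Hodge theory. This is exactly the step carried out in Chapter 7 of Morgan's book and in Section 3.2 of Nicolaescu's.
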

We proceed by discussing the deformation complex 
\eqref{d:main-complex} associated to the monopole map 
for the surface $X$. While it is a difficult problem to analyze the 
complex \eqref{d:main-complex} in general, there is 
an explicit description of its 
cohomology in the case of complex surfaces.
For a monopole 
$(\ell,0,B)$, the middle terms of \eqref{d:main-complex} are:
\[ 
i\,\Omega^{0}(X) 
\xrightarrow{\quad \exd \fr{g}|_{e^{i \cdot 0}}\quad }
\Omega^{0}(L_{\varepsilon}) 
\oplus \Omega^{0,2}(L_{\varepsilon}) \oplus 
i\,\Omega^1(X) 
\xrightarrow{\quad \exd \mu|_{(\ell,0,B)}\quad }
\Omega^{0,1}(L_{\varepsilon}) \oplus 
i\,\Omega^{0}(X) \omega \oplus \Omega^{0,2}(X),
\]
with the maps given by 
\begin{equation}\label{map:bloody}
\begin{split}
\exd \fr{g}|_{e^{i \cdot 0}}(i f) 
& = (-if \ell,i\,\exd f),\text{ and}\\
\exd \mu(\dot{\ell}, \dot{\beta}, \dot{B}) & = 
\left( 
\bar{\del}_B \dot{\ell} + \bar{\del}^{*}_{B} \dot{\beta} + \dot{B}^{0,1} \ell,\  
2(\exd \dot{B}^{+})^{1,1} - \frac{i}{2}\, \ell^{*} \dot{\ell} \omega,\ 
2\, \bar{\del} \dot{B}^{0,1} - \dfrac{\ell^{*} \dot{\beta}}{2}
\right),
\end{split}
\end{equation}
where
$\dot{\ell} \in \Omega^{0}(L_{\varepsilon})$, 
$\dot{\beta} \in \Omega^{0,2}(L_{\varepsilon})$, and 
$\dot{B} \in i\,\Omega^{1}(X)$. 
Here $\ell^{*} \dot{\ell}$ is 
the image of $\ell^{*} \otimes \dot{\ell} \in L_{\varepsilon}^{*} \otimes 
L_{\varepsilon}$ under the evaluation map $L_{\varepsilon}^{*} \otimes 
L_{\varepsilon} \to \overline{\cc}$.
\smallskip%

Associated to the divisor $C = \ell^{-1}(0)$, there is a natural short exact sequence:
\[
0 \to \scro_{X} \xrightarrow{\times \ell} \scro_{X}(C) \to \scro_{C}(C) \to 0,
\]
where $\scro_{X}$ is the sheaf of holomorphic 
functions of $X$, $\scro_{X}(C)$ 
is the sheaf of holomorphic sections of $L_{\varepsilon}$, and 
$\scro_{C}(C)$ is the restriction of $\scro_{X}(C)$ onto $C$. 
The map $\times \ell \colon \scro_{X} \to \scro_{X}(C)$ is the 
multiplication by $\ell$. From this short exact sequence, we have 
the associated long exact cohomology sequence:
\begin{equation}\label{d:exact-standard}
\begin{split}
0 \to H^0(X;\scro_{X}) \xrightarrow{\times \ell} H^0(X;\scro_{X}(C)) &\to 
H^0(X;\scro_{C}(C)) \to H^1(X;\scro_{X}) \xrightarrow{\times \ell} 
H^1(X;\scro_{X}(C)) \to\\
& \to H^1(X;\scro_{C}(C)) \to H^2(X;\scro_X) \to H^2(X;\scro_{X}(C)) \to 0.
\end{split}
\end{equation}
The following result is due to 
Friedman-Morgan (see \cite[Th.\,2.1]{F-M}) and Kronheimer \cite[Prop.\,4.2]{K}.
\begin{theorem}[\cite{F-M, K}]\label{thm:f-m}
If $(\ell,0,B)$ is an irreducible solution of \eqref{eq:kahler-sw}, then 
the cohomolgy groups $\calh_{(\ell,0,B)}^1$ and $\calh_{(\ell,0,B)}^2$ 
sit in an exact sequence:
\[
\begin{split}
0 \to H^0(X;\scro_{X}) \xrightarrow{\times \ell} H^0(X;\scro_{X}(C)) &\to 
\calh_{(\ell,0,B)}^1 \to H^1(X;\scro_{X}) \xrightarrow{\times \ell} 
H^1(X;\scro_{X}(C)) \to\\
& \to \calh_{(\ell,0,B)}^2 \to H^2(X;\scro_X) \to H^2(X;\scro_{X}(C)) \to 0.
\end{split}
\]
\end{theorem}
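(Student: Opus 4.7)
The plan is to identify the Seiberg-Witten deformation complex $S^\bullet$ at $(\ell, 0, B)$ with the total Dolbeault complex of the two-term holomorphic complex
\[
K^\bullet = \bigl[\, \scro_X \xrightarrow{\times\ell} \scro_X(C) \,\bigr],
\]
from which the sequence in the theorem follows as the long exact hypercohomology sequence of $K^\bullet$. By the previous theorem a solution of \eqref{eq:kahler-sw} has $F^{0,2}_B \equiv 0$ and $\bar\partial_B \ell \equiv 0$, so $B$ defines a holomorphic structure on $L_\varepsilon$ and $\ell$ is a holomorphic section with zero divisor $C$. Consequently $(\Omega^{0,*}(X),\bar\partial)$ and $(\Omega^{0,*}(L_\varepsilon),\bar\partial_B)$ are fine Dolbeault resolutions of $\scro_X$ and $\scro_X(C)$, joined by the chain map $\times\ell$, and their total complex $\widetilde K^\bullet$ computes $\mathbb{H}^i(K^\bullet) = H^{i-1}(X,\scro_C(C))$ for $i\ge 1$.

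To build a quasi-isomorphism $\widetilde K^\bullet \to S^\bullet$ I would use the Kähler identifications of the real spaces in $S^\bullet$ with Dolbeault components: $i\Omega^0(X)$ with real functions inside $\Omega^{0,0}(X)$; $i\Omega^1(X) \cong \Omega^{0,1}(X)$ via $\dot B \mapsto \dot B^{0,1}$; and the standard splitting $i\Omega^2_+(X) \cong i\Omega^0(X)\omega \oplus \Omega^{0,2}(X)$. Under these identifications (after complexifying so both sides are complexes of complex vector spaces), the gauge map $\exd\fr{g}(if) = (-if\ell, i\exd f)$ and the three components of $\exd\mu$ given in \eqref{map:bloody} become explicit combinations of $\bar\partial$, $\bar\partial_B$, multiplication by $\ell$, and their formal adjoints $\bar\partial^*_B$ and $\ell^*$. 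These match entry by entry the differentials of $\widetilde K^\bullet$ on the $\Omega^{0,0}(X)$, $\Omega^{0,0}(L_\varepsilon)$, $\Omega^{0,1}(X)$, $\Omega^{0,1}(L_\varepsilon)$, and $\Omega^{0,2}(X)$ slots.

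The main obstacle is the $\dot\beta \in \Omega^{0,2}(L_\varepsilon)$ direction, which sits in $S^1$ but in $\widetilde K^\bullet$ lives only at the top degree $3$. To reconcile this, I would observe that the operators $\bar\partial^*_B$ and the contraction $\ell^*\dot\beta$ appearing in $\exd\mu$ are, via the pointwise Hodge star of the Kähler metric, Serre duals of $\bar\partial_B$ and multiplication by $\ell$; consequently the $\dot\beta$-subcomplex of $S^\bullet$ reproduces, after the Hodge star, the top row $\Omega^{0,1}(L_\varepsilon) \to \Omega^{0,2}(L_\varepsilon)$ of the Dolbeault resolution of $\scro_X(C)$. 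Combining the two halves produces a chain map $\widetilde K^\bullet \to S^\bullet$ that is a degree-wise isomorphism after complexification, hence a quasi-isomorphism. This yields $\calh^i_{(\ell,0,B)} \cong \mathbb{H}^i(K^\bullet) \cong H^{i-1}(X,\scro_C(C))$ for $i=1,2$, and the exact sequence in the statement is then the long exact cohomology sequence \eqref{d:exact-standard} of $0 \to \scro_X \to \scro_X(C) \to \scro_C(C) \to 0$ with $H^{i-1}(X,\scro_C(C))$ relabeled as $\calh^i_{(\ell,0,B)}$.
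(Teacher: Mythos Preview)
The paper does not prove this theorem; it is quoted from Friedman--Morgan \cite{F-M} and Kronheimer \cite{K}. What the paper does do, in the two lemmas immediately following the statement, is construct explicitly the map $\delta\colon\calh^2_{(\ell,0,B)}\to H^2(X;\scro_X)$ appearing in the sequence, using the invertibility of the operator $T=\bar\del_B\bar\del^*_B+\tfrac{|\ell|^2}{4}$ on $\Omega^{0,2}(L_\varepsilon)$.

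Your overall plan --- identify the deformation complex with the total Dolbeault complex of $K^\bullet=[\scro_X\xrightarrow{\times\ell}\scro_X(C)]$ and then read off the long exact hypercohomology sequence --- is exactly the approach of the cited references. However, your execution has a genuine gap at the $\dot\beta$ step. The total complex $\widetilde K^\bullet$ has four nonzero terms (degrees $0$ through $3$), while the Seiberg--Witten complex $S^\bullet$ has only three, so there cannot be a chain map that is a ``degree-wise isomorphism.'' The Hodge-star argument you sketch does not produce such a map: the $\dot\beta$ slot sits in degree $1$ of $S^\bullet$, whereas $\Omega^{0,2}(L_\varepsilon)$ sits in degree $3$ of $\widetilde K^\bullet$, and dualising a differential does not move a term across two degrees. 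What one actually does (and this is where the paper's lemma on $T$ enters) is show that the $\dot\beta$ direction can be \emph{contracted}: since $T$ is invertible, the equations $\bar\del_B\gamma-\tfrac12\nu\ell=T\dot\beta$ can be solved uniquely for $\dot\beta$, and this yields a chain homotopy equivalence (not an isomorphism) between $S^\bullet$ and a three-term complex whose cohomology is visibly $\mathbb{H}^i(K^\bullet)$. Once you replace ``degree-wise isomorphism'' with this contraction argument, your proof goes through and agrees with Kronheimer's.
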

\begin{lemma}
The operator 
$T \colon \bar{\del}_B \bar{\del}^{*}_{B} + \dfrac{|\ell|^2}{4} \colon 
\Omega^{0,2}(L_{\varepsilon}) \to \Omega^{0,2}(L_{\varepsilon})$ 
is an isomorphism.
\end{lemma}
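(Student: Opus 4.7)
The plan is to show that $T$ is a formally self-adjoint elliptic operator whose kernel is trivial; standard Fredholm theory for elliptic operators on a closed manifold then gives that $T$ is an isomorphism.

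\textbf{Ellipticity.} At first sight $\bar{\del}_B \bar{\del}^{*}_{B}$ is only half of the Dolbeault Laplacian and is not elliptic. However, since $X$ is a complex surface, $\Lambda^{0,3} = 0$, so $\bar{\del}_B$ is the zero map out of $\Omega^{0,2}(L_{\varepsilon})$. Consequently $\bar{\del}^{*}_{B} \bar{\del}_B$ vanishes identically on $\Omega^{0,2}(L_{\varepsilon})$ and therefore
\[
\bar{\del}_B \bar{\del}^{*}_{B} = \bar{\del}_B \bar{\del}^{*}_{B} + \bar{\del}^{*}_{B} \bar{\del}_B = \Delta_{\bar{\del}} \quad \text{on } \Omega^{0,2}(L_{\varepsilon}).
\]
Thus $T = \Delta_{\bar{\del}} + \tfrac{|\ell|^2}{4}$ is a zeroth-order perturbation of an elliptic second-order operator and is itself elliptic. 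Combined with the evident formal self-adjointness (each summand is self-adjoint), this makes $T$ a Fredholm operator of index zero on the appropriate Sobolev completion.

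\textbf{Triviality of the kernel.} Suppose $T\beta = 0$. Pair with $\beta$ in the $L^2$ inner product and integrate by parts:
\[
0 = \langle T\beta, \beta \rangle = \|\bar{\del}^{*}_{B}\beta\|_{L^2}^{2} + \tfrac{1}{4}\int_X |\ell|^2 |\beta|^2.
\]
Both terms are non-negative, so each vanishes separately. In particular $|\ell|^2 |\beta|^2 \equiv 0$, meaning $\beta$ is supported on the zero locus of $\ell$. Here one uses that the lemma is applied at an irreducible monopole $(\ell, 0, B)$ for the Kähler equations, so by the discussion preceding Theorem \ref{thm:witten} the section $\ell$ is holomorphic, and by Theorem \ref{thm:witten}(b) it does not vanish identically. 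Hence $\ell^{-1}(0)$ is a proper analytic subvariety and has Lebesgue measure zero, which forces $\beta = 0$ almost everywhere; standard elliptic regularity applied to the equation $T\beta = 0$ then gives $\beta \equiv 0$.

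\textbf{Conclusion.} Since $T$ is formally self-adjoint, its cokernel is isomorphic to its kernel and thus also vanishes, so $T$ is bijective, and elliptic regularity upgrades this to an isomorphism between the relevant Sobolev spaces. The only genuinely subtle point is the ellipticity observation: it is the complex-surface dimension constraint $\Lambda^{0,3} = 0$ that rescues $\bar{\del}_B \bar{\del}^{*}_{B}$ from being non-elliptic, and without it a purely functional-analytic kernel argument would still leave the closed-range question open.
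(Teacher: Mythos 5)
Your proof is correct and follows essentially the same route as the paper's: self-adjointness reduces the claim to injectivity, which is obtained by pairing $T h = 0$ with $h$ in $L^2$ and observing that both resulting terms are non-negative. The paper's version is terser --- it does not spell out the ellipticity point (that $\Lambda^{0,3} = 0$ makes $\bar{\del}_B \bar{\del}^{*}_{B}$ the full Dolbeault Laplacian on $(0,2)$-forms) nor the measure-zero argument for $\ell^{-1}(0)$ --- so your added detail is a faithful filling-in of the same argument rather than a different one.
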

\begin{proof}
$T$ is self-adjoint; thus it suffices 
to prove that $T$ is injective. If 
$\bar{\del}_B \bar{\del}^{*}_{B} h + \dfrac{|\ell|^2}{4} h = 0$, 
then, by taking the inner product with $h$, we find:
\[
\int_{X} \langle \bar{\del}^{*}_{B} h, \bar{\del}^{*}_{B} 
h \rangle + \frac{1}{4}\int_{X} |\ell|^2 \langle h, h \rangle = 0,\quad\text{\normalfont{hence $h \equiv 0$.}}
\]
\qed
\end{proof}
Define $\delta \colon 
\Omega^{0,1}(L_{\varepsilon}) \oplus 
i\,\Omega^{0}(X) \omega \oplus 
\Omega^{0,2}(X) \to H^2(X;\scro_X)$ by
\[
\delta(\gamma, i f \omega, \nu) = \nu + \dfrac{\ell^{*}}{2} T^{-1}\left( \bar{\del}_{B} \gamma - \frac{1}{2} \nu \ell \right).
\]
The following statement is implicit in \cite{F-M}.
\begin{lemma}\label{prop:salamon}
The map $\delta$ gives a well-defined 
map $\calh^2_{(\ell,0,B)} \to H^2(X;\scro_X)$.
\end{lemma}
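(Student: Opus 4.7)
The plan is to show that, after composing $\delta$ with the Dolbeault projection $\Omega^{0,2}(X) \to \Omega^{0,2}(X)/\bar\del\,\Omega^{0,1}(X) \cong H^2(X;\scro_X)$, the resulting map vanishes on $\im \exd \mu$; this will guarantee that it descends to the cokernel $\calh^2_{(\ell,0,B)}$. In other words, I must verify that for every triple $(\dot\ell, \dot\beta, \dot B) \in \Omega^{0}(L_{\varepsilon}) \oplus \Omega^{0,2}(L_{\varepsilon}) \oplus i\,\Omega^{1}(X)$, the form $\delta\bigl(\exd\mu(\dot\ell,\dot\beta,\dot B)\bigr) \in \Omega^{0,2}(X)$ is $\bar\del$-exact.

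I will carry out this verification by direct substitution of the formulas \eqref{map:bloody} into the definition of $\delta$. The critical input is that $(\ell, 0, B)$ solves \eqref{eq:kahler-sw}, so the preceding theorem supplies the two monopole identities $F^{0,2}_B = 0$ and $\bar\del_B \ell = 0$. These force $\bar\del_B^{2}\,\dot\ell = F^{0,2}_B\cdot \dot\ell$ to vanish and, by the Leibniz rule, reduce $\bar\del_B(\dot B^{0,1}\,\ell)$ to $\bar\del \dot B^{0,1} \cdot \ell$. With these simplifications in hand, the expression $\bar\del_B \gamma - \tfrac12\nu \ell$ sitting inside $T^{-1}$ should collapse cleanly to $T\dot\beta$: the $\bar\del \dot B^{0,1} \cdot \ell$ contributions will cancel between $\bar\del_B \gamma$ and $-\tfrac12\nu\ell$, while the $\tfrac14|\ell|^2 \dot\beta$ term assembled from $-\tfrac12\nu\ell$ will combine with $\bar\del_B \bar\del_B^* \dot\beta$ to reconstitute the full operator $T = \bar\del_B \bar\del_B^* + \tfrac{|\ell|^2}{4}$.

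Invoking the invertibility of $T$ from the previous lemma, I then read off $T^{-1}(\bar\del_B \gamma - \tfrac12 \nu \ell) = \dot\beta$, so the correction term $\tfrac{\ell^*}{2}\dot\beta$ in $\delta$ precisely cancels the $-\tfrac{\ell^*\dot\beta}{2}$ piece of $\nu$, leaving $\delta \circ \exd\mu(\dot\ell,\dot\beta,\dot B) = 2\bar\del\dot B^{0,1}$. This is manifestly $\bar\del$-exact, hence trivial in $H^2(X;\scro_X)$, which completes the check.

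The one non-mechanical ingredient is appreciating why $\delta$ carries its particular correction term: the operator $T$ is engineered precisely to invert the combination that $\exd\mu$ deposits simultaneously into the $\gamma$- and $\nu$-slots through $\dot\beta$, making the cancellation above automatic. Once that design is recognized, the rest is a purely algebraic manipulation resting only on the two K\"ahler monopole identities $F^{0,2}_B = 0$ and $\bar\del_B \ell = 0$.
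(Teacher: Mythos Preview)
Your proof is correct and follows essentially the same route as the paper: both substitute the formulas \eqref{map:bloody} for $(\gamma,\nu)$, use the monopole identities $F^{0,2}_B=0$ and $\bar\del_B\ell=0$ to reduce $\bar\del_B\gamma - \tfrac12\nu\ell$ to $T\dot\beta$, and then conclude that $\delta$ applied to anything in $\im\exd\mu$ equals $2\bar\del\dot B^{0,1}$, which is $\bar\del$-exact. The paper's write-up is terser but the underlying computation is identical.
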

\begin{proof}
Suppose $(\gamma, i f \omega, \nu)$ satisfies
\begin{equation}\label{in-the-image}
\gamma = \bar{\del}_B \dot{\ell} + \bar{\del}^{*}_{B} \dot{\beta} + \dot{B}^{0,1} \ell,
\quad 
\nu = 2\, \bar{\del} \dot{B}^{0,1} - \dfrac{\ell^{*} \dot{\beta}}{2}.
\end{equation}
Then, by differentiating $\gamma$, we find:
\[
\bar{\del}_{B} \gamma =  \bar{\del}_{B} \bar{\del}^{*}_{B} \dot{\beta} +  
\bar{\del}\dot{B}^{0,1} \ell \stackrel{\text{\eqref{in-the-image}}}{=}
\bar{\del}_{B} \bar{\del}^{*}_{B} \dot{\beta} + 
\dfrac{1}{2}\nu \ell + \dfrac{|\ell|^2}{4} \dot{\beta},\ 
\text{\normalfont{which is equivalent to}}\ 
\bar{\del}_{B} \gamma - \dfrac{1}{2}\nu \ell = T(\dot{\beta}).
\]
Hence, $\delta(\gamma, i f \omega, \nu)$ is equal to $2\, \bar{\del} \dot{B}^{0,1}$, which 
is a $\bar{\del}$-exact form. \qed
\end{proof}

\section{Seiberg-Witten for Hamiltonian bundles}
The following material is well-known; 
see \cite[\S\,3.3]{Nic} for details. Let $(X,\omega)$ be a closed symplectic $4$-manifold, 
$J$ an $\omega$-compatible almost-complex structure, 
and $g(\cdot,\cdot) = \omega(\cdot,J\cdot)$ the associated Hermitian metric. 
The symplectic form $\omega$ is $g$-self-dual and of type $(1,1)$ with 
respect to $J$. The almost-complex structure $J$ gives a 
canonical spin$^{\cc}$ structure $\fr{s}_0$ with spinor bundles 
\[
W^{+} = \Lambda^{0,0} \oplus \Lambda^{0,2},\quad 
W^{-} = \Lambda^{0,1}.
\]
There exists a special connection $A_0$ on $K^{*}_{X} = \det\, W^{+}$ such that the induced 
Dirac operator is
\[
\cald^{A_0} \colon \Omega^{0,0}(X;\cc) \oplus \Omega^{0,2}(X;\cc) \to 
\Omega^{0,1}(X;\cc)\quad \cald^{A_0} = \sqrt{2}\, ( \bar{\del} \oplus \bar{\del}^{*}).
\]
See Proposition 1.4.23 in \cite{Nic} for the proof that $A_0$ exists. 
As before, we choose the spin$^{\cc}$ structure 
$\fr{s}_{\varepsilon}$ as in 
\eqref{eq:spin-eps} and parameterize all connections 
on $\call = K_{X}^{*} \otimes L_{\varepsilon}^2$ as $A = A_0 + 2\,B$, with $B$ being 
a $\mib{U}(1)$-connection on $L_{\varepsilon}$. 
The unperturbed Seiberg-Witten 
equations are:
\begin{equation*}
 \begin{cases}
   \bar{\del}_{B} \ell + \bar{\del}_{B}^{*} \beta = 0, 
   \\
   F^{0,2}_{A_0} + 2\,F^{0,2}_{B} = \dfrac{\ell^{*} \beta}{2},
   \\
   (F_{A}^{+})^{1,1} + 2 (F^{+}_{B})^{1,1} = \dfrac{i}{4}\,(|\ell|^2 - |\beta|^2) \omega,
 \end{cases}
\end{equation*}
Choosing the perturbing term as
\begin{equation}\label{eq:taubes-eta}
    i\,\eta = F^{+}_{A_0} - i\, \rho^2 \omega,
\end{equation}
we obtain what's called the $\rho$-perturbed Seiberg-Witten equations. 
The following result is due to Taubes in \cite{Taub-2}:
\begin{theorem}[Taubes, \cite{Taub-2}]\label{thm:taubes}
Under the assumption $\varepsilon \neq 0$ and $\varepsilon \cdot [\omega] \leq 0$, 
the $\rho$-perturbed Seiberg-Witten equations have no solutions for 
$\rho$ sufficiently large.
\end{theorem}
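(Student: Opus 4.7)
The plan is to substitute Taubes's perturbation $i\eta = F^{+}_{A_0} - i\rho^{2}\omega$ into the monopole map and combine pointwise a priori estimates with a Chern--Weil integral identity to obstruct solutions for $\rho$ large. Writing $A = A_0 + 2B$ and $\varphi = (\ell,\beta)$, the perturbed equations reduce (after the $F_{A_0}^{+}$ terms cancel against $i\eta$) to
\[
\bar\partial_B \ell + \bar\partial_B^{*}\beta = 0,\qquad 2F_B^{0,2} = \tfrac{1}{2}\ell^{*}\beta,\qquad 2(F_B^+)^{1,1} = \tfrac{i}{4}\bigl(|\ell|^2 - |\beta|^2 - 4\rho^2\bigr)\omega.
\]

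The first step is to extract pointwise $C^{0}$-bounds. I would apply the Bochner--Weitzenb\"ock identity $(\cald^A)^{*}\cald^A = \nabla^{*}\nabla + s/4 + \tfrac{1}{2}c(F_A^+)$ to the Dirac equation $\cald^A\varphi = 0$, substitute the third equation above into the Clifford multiplication term, and run the maximum principle on $|\ell|^2$ and $|\beta|^2$. This yields bounds of the shape
\[
|\ell|^{2} \leq 4\rho^{2} + K_{1},\qquad |\beta|^{2} \leq K_{2}/\rho^{2},
\]
with $K_1, K_2$ depending only on the background geometry $(X,\omega,g)$. In particular $\beta \to 0$ uniformly as $\rho \to \infty$.

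The second step is Chern--Weil: wedging the $(1,1)$-curvature equation with $\omega$, integrating over $X$, and using $\int_X iF_B \wedge \omega = -2\pi\,\varepsilon\cdot[\omega]$ produces
\[
\int_X \bigl(|\ell|^{2} - |\beta|^{2} - 4\rho^{2}\bigr)\,d\mathrm{vol} = -8\pi\,\varepsilon\cdot[\omega].
\]
Simultaneously, pairing $(\cald^A)^{*}\cald^A\varphi = 0$ with $\varphi$ and integrating by parts produces a quartic identity in which $\rho^{2}\int(|\ell|^{2}-|\beta|^{2})$ is balanced against $\int \tfrac{1}{4}|\varphi|^{4}$, up to geometric corrections involving $s$ and $F_{A_0}^{+}$.

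The final step is to combine these. Substituting the Chern--Weil identity into the $\rho^{2}$-term of the integrated Weitzenb\"ock identity cancels the leading $\rho^{4}$ contribution on both sides exactly, leaving a $\rho^{2}$-order inequality in which the topological term $-8\pi\,\varepsilon\cdot[\omega]$ carries the wrong sign when $\varepsilon\cdot[\omega] \leq 0$ and $\varepsilon \neq 0$. The principal obstacle is precisely this sub-leading book-keeping: with the leading $\rho^{4}$ contributions cancelling identically, the contradiction lives entirely at the $\rho^{2}$ order, and one must verify that the scalar curvature term and the harmonic contribution of $F_{A_0}^{+}$ remain genuinely sub-leading in $\rho$. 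The borderline case $\varepsilon\cdot[\omega] = 0$, $\varepsilon \neq 0$ is more delicate and is handled by sharpening the pointwise bound so that $|\ell|^{2}$ is pinned $o(1)$-close to $4\rho^{2}$ almost everywhere, whence $\ell$ becomes approximately nowhere-vanishing and cannot be a section of the topologically non-trivial bundle $L_{\varepsilon}$.
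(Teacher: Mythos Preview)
The paper does not supply its own proof of this statement: it attributes the result to Taubes and directs the reader to Theorem~3.3.29 of Nicolaescu's book for the argument. Your proposal is a reasonable sketch of precisely that standard Taubes argument, so in that sense you are in line with what the paper invokes.

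The outline is sound: the cancellation of $F_{A_0}^{+}$ against the perturbation, the Weitzenb\"ock/maximum-principle bounds, the Chern--Weil identity from wedging with $\omega$, and the endgame of combining these at order $\rho^{2}$ are all correct in spirit. Two points deserve tightening. First, the pointwise bound $|\beta|^{2}\le K_{2}/\rho^{2}$ does not drop out of a single maximum-principle step; in Taubes's argument it is obtained by a bootstrap in which one first controls $|\varphi|^{2}$, feeds that back into the Weitzenb\"ock identity for the $\beta$-component, and only then extracts the $\rho^{-2}$ decay. Second, in the borderline case $\varepsilon\cdot[\omega]=0$, the phrase ``almost everywhere'' is not enough: you need $\ell$ to be genuinely nowhere-vanishing, which follows once the refined estimate $\sup_{X}\bigl|\,|\ell|^{2}/(4\rho^{2})-1\,\bigr|\to 0$ is established. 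With that in hand, $\ell$ trivialises $L_{\varepsilon}$ and forces $\varepsilon=0$, giving the contradiction. These are refinements of execution rather than gaps in strategy.
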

The argument can 
also read from Theorem 3.3.29 in \cite{Nic}.
\smallskip%

Let $\calx \xrightarrow{X_b} B$ be a smooth fiber bundle with fiber $X$, where 
$X$ is a simply-connected $4$-manifold and $B$ is the $2$-sphere, which 
we regard as the union $\Delta^{+} \cup \Delta^{-}$ of two unit disks. 
We denote the equator $\del \Delta^{+} = \del \Delta^{-}$ by 
$\del$. As any bundle over a disk is trivial, we can build 
$\calx$ by taking two products $\Delta^{\pm} \times X$ and gluing them 
along the boundary $\del \times X$ by a loop $g_t \in \diff_0(X)$,
\[
\calx = (\Delta^{+} \times X) \bigcup (\Delta^{+} \times X)/\sim.\quad 
(e^{it}, g_t(x))_{+} \sim (e^{it}, x)_{+}.
\]
By definition, a Hamiltonian bundle is built from a loop $g_t$ in 
$\symp(X,\omega)$. Thus, if a smooth bundle $\calx$ is Hamiltonian, 
there exists a smooth family of cohomologous symplectic forms 
$\left\{ \omega_b \right\}_{b \in B}$ on the fibers 
$\left\{ X_b \right\}_{b \in B}$ of $\calx$. 
Choosing a family $\left\{ J_b \right\}_{b \in B}$ of 
$\omega_b$-compatible almost-complex structures, we 
also obtain a family of canonical Hermitian metrics 
$\left\{ g_b \right\}_{b \in B}$. (Recall here that the space of compatible almost-complex structures is non-empty and contractible. See, e.g., \cite[Prop.\,4.1.1]{McD-Sa-2}.) 
Pick a class $\varepsilon \in H^2(X;\zz)$. Let 
$\fr{s}_{\varepsilon}$ be the spin$^{\cc}$ structure on $X_b$ 
given by \eqref{eq:spin-eps}. Using Lemma \ref{spin-extend}, 
we can choose a spin$^{\cc}$ structure on $T_{\calx/B}$ whose 
restriction to each fiber $X_b$ is $\fr{s}_{\varepsilon}$. While such 
an extension is not uniquely determined by $\fr{s}_{\varepsilon}$, we 
write it $\fr{s}_{\varepsilon}$ for short. 
Considering the family of $\rho$-perturbed Seiberg-Witten equations 
parameterized by $b \in B$, we have:
\begin{lemma}\label{prop:vanish-for-bundles}
Let $(X,\omega)$ be a closed simply-connected symplectic 
$4$-manifold, and 
let $\calx \to B$ be a Hamiltonian fiber bundle 
with fiber $X$ symplectomorphic to $(X,\omega)$, where $B$ is the $2$-sphere. 
Suppose that $\varepsilon \in H^2(X;\zz)$ satisfies
\[
\varepsilon \neq 0,\quad \varepsilon \cdot [\omega] \leq0,\quad 
c_1(X) \cdot \varepsilon + \varepsilon^2 = -2.
\]
Then $\fsw_{(g_{b},\eta_{b})}(\fr{s}_{\varepsilon}) = 0$, where 
$\eta_b$ chosen as in \eqref{eq:taubes-eta} for 
$\rho$ large enough. If $b^{+}(X) > 3$, then 
$\fsw(\fr{s}_{\varepsilon}) = 0$ for $\calx$.
\end{lemma}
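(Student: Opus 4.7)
The plan is to reduce to Taubes' fiberwise non-solvability theorem and then propagate the vanishing using Li--Liu invariance. The mechanism is: the Hamiltonian condition gives us access to a whole family of Taubes-type perturbations, each of which kills the Seiberg--Witten moduli space fiber by fiber; all that remains is to arrange compatibility across the base and to invoke the invariance of the family invariant.

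First, I would unpack the Hamiltonian hypothesis: it provides a smooth family $\{\omega_b\}_{b \in B}$ of fiberwise symplectic forms, all in the same cohomology class as $\omega$, and (after choosing a smooth family of $\omega_b$-compatible almost-complex structures $\{J_b\}$, which exists because the space of such structures is contractible) a smooth family of Hermitian metrics $g_b(\cdot,\cdot) = \omega_b(\cdot, J_b \cdot)$ and canonical $\mib{U}(1)$-connections $A_{0,b}$ on $K_{X_b}^{*}$. I define the perturbing forms $\eta_b$ by the Taubes formula $i\eta_b = F^{+}_{A_{0,b}} - i\rho^2 \omega_b$ from \eqref{eq:taubes-eta}, depending on a positive parameter $\rho$. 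For $\rho$ large, the harmonic part of $\eta_b + 2\pi c_1(\call_b)$ is dominated by the non-vanishing term $-\rho^2 [\omega_b]$, so the admissibility condition \eqref{regular-families} is automatic for every $b$ and the family $\{(g_b,\eta_b)\}$ gives a section of $\mathrm{R}^*\to B$.

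Next, for each $b \in B$, the hypotheses $\varepsilon \neq 0$ and $\varepsilon \cdot [\omega_b] = \varepsilon \cdot [\omega] \leq 0$ place us in the setting of Theorem \ref{thm:taubes}, which furnishes a threshold $\rho_b < \infty$ such that no $\rho$-perturbed Seiberg--Witten solutions exist on $X_b$ for $\rho > \rho_b$. The main technical step, and I expect the only delicate point, is to upgrade this to a single $\rho$ working uniformly across $B$: Taubes' threshold depends on pointwise and integral quantities built from $(g_b,\omega_b,A_{0,b})$, all of which vary continuously with $b$, and since $B = S^2$ is compact these quantities are uniformly controlled. I would then set $\rho_* = \sup_b \rho_b < \infty$ and fix any $\rho > \rho_*$. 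The argument is essentially a compactness one, but I would verify carefully how Taubes' estimates depend on the metric and symplectic form.

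With such a uniform $\rho$, the total family moduli space $\fr{M}_{(g_b,\eta_b)} = \bigcup_{b} \scrm_{(g_b,\eta_b)}$ is empty, so the count defining $\fsw_{(g_b,\eta_b)}(\fr{s}_\varepsilon)$ is trivially $0$; this proves the first assertion. For the second, assume $b^{+}(X) > 3 = \dim B + 1$. The Li--Liu invariance theorem at the end of Section \ref{sec:family} then guarantees that $\fsw(\fr{s}_\varepsilon)$ is independent of the choice of family $(g_b, \eta_b) \in \Gamma(B,\mathrm{R}^{*})$, so it equals the value just computed, namely $0$.
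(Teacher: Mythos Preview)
Your proof is correct and follows the same route as the paper: the paper's own proof is the single line ``Follows from Theorem~\ref{thm:taubes}'', and you have simply unpacked that reference, making explicit the uniform choice of $\rho$ over the compact base and the appeal to Li--Liu invariance for the $b^{+}>3$ statement.
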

\begin{proof}
Follows from Theorem \ref{thm:taubes}. \qed
\end{proof}

\section{The Kodaira-Spencer map}\label{sec:kodaira}
The following material is well-known; see, eg, work of Griffiths \cite{Griff}. 
Let $\left\{ X_t\right\}_{t \in \Delta}$, where $\Delta \subset \cc$ is 
a complex disk, be a complex-analytic 
family of compact K{\"a}hler surfaces. 
More precisely, we assume given a complex $3$-fold $\calv$, together 
with a proper, maximal rank holomorphic 
map $p \colon \calv \to \Delta$ such that $p^{-1}(t)= X_t$. 
Let $C \subset X_0$ be a smooth rational curve which is embedded 
in $\calv$ as a $(-1,-1)$-curve. 
\smallskip%

Let $\scro_{\calv}(T_{\calv})$ be the 
sheaf of holomorphic sections of $T_{\calv}$, $\scro_{X_0}(T_{\calv})$ the restriction of $\scro_{\calv}(T_{\calv})$ to $X_0$, and $\scro_{X_0}(T_{X_0})$ the sheaf of holomorphic 
sections of $T_{X_0}$. The short exact sequence  
\[
0 \to \scro_{X_0}(T_{X_0}) \to \scro_{X_0}(T_{\calv}) \to \scro \to 0,
\]
where $\scro$ is regarded as the sheaf of sections of the trivial bundle $T_{\calv}/T_{X_0}$, gives 
the cohomology long exact sequence
\begin{equation}\label{d:k-s}
\ldots \to H^0(X_0; \scro_{X_0}(T_{\calv})) \to H^0(X_0;\scro) \to H^1(X_0;\scro_{X_0}(T_{X_0})) \to \ldots
\end{equation}
Let $\left[ \dfrac{\del}{\del t} \right]$ be a generator of $H^0(X_0;\scro) \cong \cc$. 
By definition, the Kodaira-Spencer class 
\[
\rho \in H^1(X_0;\scro_{X_0}(T_{X_0}))
\]
of the family $\calv$ at $t = 0$ is the image of $\left[ \dfrac{\del}{\del t} \right]$ under the connecting homomorphism of \eqref{d:k-s}.
\begin{lemma}\label{rho-nonzero}
$\rho$ is non-trivial.
\end{lemma}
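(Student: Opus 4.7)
The plan is to argue by contradiction, using the connecting homomorphism characterization of $\rho$ and exploiting the rigidity of the $(-1,-1)$-curve $C$ inside $\calv$.

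Suppose $\rho = 0$. By exactness of \eqref{d:k-s}, the generator $\left[ \tfrac{\del}{\del t} \right] \in H^0(X_0;\scro)$ must lift to a global section
\[
\xi \in H^0(X_0;\scro_{X_0}(T_{\calv})),
\]
that is, a holomorphic vector field along $X_0$ taking values in $T_\calv|_{X_0}$ whose image in the normal bundle $N_{X_0/\calv} \cong \scro_{X_0}$ is the nowhere-vanishing constant section $1$. The first step is to restrict this hypothetical $\xi$ to the curve $C$; the main work is then to show that such a restriction cannot exist.

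To do so, I would use the two standard normal-bundle sequences for the pair $C \subset X_0 \subset \calv$. Since $C$ is embedded in $\calv$ as a $(-1,-1)$-curve and in $X_0$ as a smooth rational $(-2)$-curve, we have
\[
N_{C/X_0} \cong \scro_{\pp^1}(-2), \qquad N_{C/\calv} \cong \scro_{\pp^1}(-1) \oplus \scro_{\pp^1}(-1),
\]
and these fit into the short exact sequence of bundles on $C$:
\[
0 \to N_{C/X_0} \to N_{C/\calv} \to N_{X_0/\calv}|_C \to 0.
\]
Restricting $\xi$ to $C$ and projecting along $T_{X_0}|_C$ yields a section of $N_{X_0/\calv}|_C \cong \scro_C$ equal to the constant $1$. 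Because this projection factors through the middle term as $T_\calv|_C \twoheadrightarrow N_{C/\calv} \twoheadrightarrow N_{X_0/\calv}|_C$, it forces the image of $\xi|_C$ in $H^0(C; N_{C/\calv})$ to be a nonzero section mapping to $1$ in $H^0(C;\scro_C)$.

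The contradiction is then immediate: $H^0(\pp^1; \scro(-1) \oplus \scro(-1)) = 0$, so no such section exists. The only subtle point, which I would spell out carefully, is the factorization of the projection $T_\calv|_C \to N_{X_0/\calv}|_C$ through $N_{C/\calv}$, but this is a formal consequence of the inclusions $T_C \subset T_{X_0}|_C \subset T_\calv|_C$. Hence $\rho \neq 0$. \qed
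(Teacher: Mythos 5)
Your proof is correct and is essentially the paper's argument in contrapositive form: both reduce to the vanishing $H^0(C;\scro_C(N_{C|\calv}))=0$ coming from the $(-1,-1)$ assumption, the paper by deducing injectivity of the connecting homomorphism $H^0(C;\scro)\to H^1(C;\scro_C(N_{C|X_0}))$ in its diagram of long exact sequences, you by showing directly that a lift of $\bigl[\tfrac{\del}{\del t}\bigr]$ would restrict to a nonzero section of $N_{C|\calv}$ over $C$. The factorization $T_{\calv}|_C \to N_{C|\calv} \to N_{X_0|\calv}|_C$ you flag is indeed the formal point to check, and your justification of it is fine.
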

\begin{proof}
Let $\scro_{C}(T_{C})$ be the 
sheaf of holomorphic sections of $T_C$, 
$\scro_{C}(T_{X_0})$ the restriction 
of $\scro_{X_0}(T_{X_0})$ to $C$, and $\scro_{C}(T_{\calv})$ the restriction 
of $\scro_{X_0}(T_{\calv})$ to $C$. 
We have the following commutative diagram:
\begin{equation}\label{d:many-sheaves}
\begin{tikzcd}
0 \arrow{r}{} & \scro_{X_0}(T_{X_0}) \arrow{d}{} \arrow{r}{} & \scro_{X_0}(T_{\calv}) 
\arrow{d}{} \arrow{r}{} & \scro \arrow{d}{} \arrow{r}{} & 0\\
0  \arrow{r}{} & \scro_{C}(T_{X_0}) \arrow{d}{} \arrow{r}{} & \scro_{C}(T_{\calv}) 
\arrow{d}{} \arrow{r}{} & \scro \arrow{d}{} \arrow{r}{} & 0\\
0  \arrow{r}{} & \scro_{C}(N_{C|X_0}) \arrow{r}{} & \scro_{C}(N_{C|T_{\calv}}) 
 \arrow{r}{} & \scro \arrow{r}{} & 0\,,
\end{tikzcd}
\end{equation}
where $N_{C|X_0}$ is the normal bundle to $C$ in $X_0$ and 
$N_{C|\calv}$ is the normal bundle to $C$ in $\calv$. 
The cohomology diagram of \eqref{d:many-sheaves} is written:
\begin{equation}\label{d:many-cohomogy}
\begin{tikzcd}
\ldots \arrow{r}{} & 
H^0(X_0; \scro_{X_0}(T_{\calv})) \arrow{d}{} \arrow{r}{} & 
H^0(C;\scro) \arrow{d}{} \arrow{r}{} & 
H^1(X_0;\scro_{X_0}(T_{X_0})) \arrow{d}{} \arrow{r}{} & \ldots\\
\ldots \arrow{r}{} & 
H^0(C; \scro_{C}(T_{\calv})) \arrow{d}{} \arrow{r}{} & 
H^0(C;\scro) \arrow{d}{} \arrow{r}{} & 
H^1(C;\scro_{C}(T_{X_0})) \arrow{d}{} \arrow{r}{} & \ldots\\
\ldots \arrow{r}{} & 
H^0(C; \scro_{C}(N_{C|\calv}))  \arrow{r}{} & 
H^0(C;\scro) \arrow{r}{} & 
H^1(C;\scro_{C}(N_{C|X_0})) \arrow{r}{} & \ldots\,.\\
\end{tikzcd}
\end{equation}
The assumption that $C$ is of negative self-intersection implies 
that $H^0(C; \scro_{C}(N_{C|\calv})) = 0$. 
Let $\kappa \in H^1(C;\scro_{C}(N_{C|X_0}))$ be the restriction 
of $\rho \in H^1(X_0;\scro_{X_0}(T_{X_0}))$ to $H^1(C;\scro_{C}(N_{C|X_0}))$. 
Then $\kappa$ is also the image of $\left[ \dfrac{\del}{\del t} \right] \in H^0(X_0;\scro)$ 
under the connecting homomorphism 
of the last row of \eqref{d:many-cohomogy}. That homomorphism is 
injective, since $H^0(C; \scro_{C}(N_{C|\calv})) = 0$. So $\kappa$ is non-zero, 
and neither is $\rho$. \qed
\end{proof}
\medskip%

The family $\calv$ is trivial as a $C^{\infty}$-family; i.e. we can find 
a smooth fiber-preserving diffeomorphism
\begin{equation}
\begin{tikzcd}\label{d:trivial-v}
\Delta \times X_0 \arrow{d}{} \rar{}  & \calv \arrow{d}{p} \\
\Delta \arrow{r}{\id} & \Delta\,,
\end{tikzcd}
\end{equation}
and then, using this trivialization, we regard the complex 
structures on $\left\{ X_t \right\}_{t \in \Delta}$ as 
a family $\left\{ J_t \right\}_{t \in \Delta}$ 
of complex structures on $X_0$. 
Let $L \to X_0$ be the holomorphic line bundle corresponding to the divisor 
$C$, and let $B$ be any $\mib{U}(1)$-connection on $L$ which agrees 
with the holomorphic structure on $L$. To shorten 
notation, we set: 
\[
\del_t \ell = 
\dfrac{1}{2} \left( 
\exd_{B}\, \ell - i\,\exd_{B} \circ J_t \right),\quad 
\bar{\del}_t \ell = \dfrac{1}{2} \left( 
\exd_{B}\, \ell + i\,\exd_{B} \circ J_t \right),
\]
and
\[
\del = \del_0,\quad \bar{\del} = \bar{\del}_0.
\]
Let $F_B$ be the curvature of $B$, and let 
$(F^{0,2}_B)_t$ be the $(0,2)$-component of $F_B$ 
with respect to $J_t$. We have:
\[
(F^{0,2}_B)_t = 0 + \dot{F}^{0,2}_B\,t + O(t^2),
\]
where the class $[\dot{F}^{0,2}_B] \in H^2(X_0;\scro_X)$ 
does not depend on the choice of $B$ nor it depends 
on the trivialization \eqref{d:trivial-v}. 
\begin{lemma}\label{F-lemma}
If $p_g(X_0) > h^{0}(X_0; \scro_X(K_{X_0}-C))$, then 
$[\dot{F}^{0,2}_B]$ is non-trivial.
\end{lemma}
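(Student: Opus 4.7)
The plan is to prove $[\dot F^{0,2}_B]\neq 0$ in $H^2(X_0;\scro_{X_0})$ via Serre duality, by producing a canonical section $\omega\in H^0(X_0;K_{X_0})$ with $\int_{X_0}\omega\wedge\dot F^{0,2}_B\neq 0$. The first step is to compute $\dot F^{0,2}_B$ in terms of the Kodaira--Spencer class. Using the $C^{\infty}$-trivialization \eqref{d:trivial-v}, let $\phi\in A^{0,1}(X_0;T^{1,0}_{X_0})$ be the Dolbeault representative of $\rho$ coming from $\dot J_0$. Expanding the type decomposition for $J_t$ in terms of that for $J_0$ to first order in $t$, and re-expanding $F_B=F^{1,1}_B$ in the new bases, one finds
\[
\dot F^{0,2}_B \;=\; \phi\,\lrcorner\, F^{1,1}_B,
\]
where the contraction inserts the $T^{1,0}$-part of $\phi$ into the $(1,0)$-slot of $F^{1,1}_B$. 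In cohomology this is, up to a scalar, the image of $\rho\in H^1(T_{X_0})$ under cup product with $c_1(L)\in H^1(\Omega^1_{X_0})$.

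Next I would reduce $\int_{X_0}\omega\wedge\dot F^{0,2}_B$ to an integral along $C$. A short local computation gives the identity $\omega\wedge(\phi\lrcorner F^{1,1}_B)=-(\omega\lrcorner\phi)\wedge F^{1,1}_B$, and the $(1,1)$-form $\omega\lrcorner\phi$ is $\bar\partial$-closed because $\bar\partial\omega=0$ and $\bar\partial\phi=0$. Since $F^{1,1}_B/(2\pi i)$ represents $c_1(L)=\pd[C]$, this yields
\[
\int_{X_0}\omega\wedge\dot F^{0,2}_B \;=\; \mp\,2\pi i\int_C(\omega\lrcorner\phi)\big|_C.
\]

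Finally I would identify the right-hand side as a Serre-duality pairing on $C\cong\cp^1$. By adjunction, $K_{X_0}|_C\cong K_C\otimes N^{*}_{C|X_0}=\scro(-2)\otimes\scro(2)=\scro_C$, so $\omega|_C$ is a constant $\lambda\in\cc$. The hypothesis $p_g(X_0)>h^0(\scro(K_{X_0}-C))$, applied to the long exact cohomology sequence of $0\to\scro(K_{X_0}-C)\to\scro(K_{X_0})\to\scro(K_{X_0})|_C\to 0$, lets me pick $\omega$ with $\lambda\neq 0$. The normal component of $\phi|_C$ represents $\kappa\in H^1(C;N_{C|X_0})=H^1(\cp^1;\scro(-2))\cong\cc$, and $\kappa\neq 0$ by Lemma~\ref{rho-nonzero}. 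The integral above is, up to a non-zero scalar, the Serre-duality pairing $H^0(C;\scro_C)\otimes H^1(C;N_{C|X_0})\to\cc$, which is non-degenerate; hence it equals a non-zero multiple of $\lambda\cdot\kappa\neq 0$. The main technical obstacle is pinning down the formula $\dot F^{0,2}_B=\phi\,\lrcorner\,F^{1,1}_B$ and tracking the various contractions, restrictions and signs carefully enough that the right-hand side lines up with Serre duality on $C$; these details are delicate bookkeeping but do not affect the non-vanishing conclusion.
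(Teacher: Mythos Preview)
Your approach is correct and genuinely different from the paper's, though the two are Serre-dual to one another. The paper works on the sheaf-cohomology side: it identifies $[\dot F^{0,2}_B]$ (up to scalar) with the image of $\kappa\in H^1(C;\scro_C(C))$ under the connecting homomorphism of
\[
\ldots \to H^1(X_0;\scro_{X_0}(C))\to H^1(X_0;\scro_C(C))\to H^2(X_0;\scro_{X_0})\to\ldots,
\]
by lifting $\kappa$ to the smooth section $\partial\ell\circ\dot J$ and computing $\bar\partial(\partial\ell\circ\dot J)=-2i\,\dot F^{0,2}_B\,\ell$. The hypothesis is then fed through Riemann--Roch and Serre duality to obtain $h^1(\scro_{X_0}(C))=0$, forcing the connecting map to be injective. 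You instead work on the dual side: the hypothesis gives you an $\omega\in H^0(K_{X_0})$ with $\omega|_C\neq 0$, and you compute the Serre pairing $\int_{X_0}\omega\wedge\dot F^{0,2}_B$ directly, localizing it to the Serre pairing on $C$ between $\omega|_C$ and $\kappa$. Each argument illuminates the other: the paper's vanishing $h^1(\scro_{X_0}(C))=0$ is exactly the dual of the surjectivity of $H^0(K_{X_0})\to H^0(K_{X_0}|_C)$ that lets you pick $\omega$ with $\lambda\neq 0$. Your route is arguably more geometric; the paper's is more bookkeeping-free once the connecting map is identified.

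One technical point worth tightening: the reduction $\int_{X_0}(\omega\lrcorner\phi)\wedge F_B=\mp 2\pi i\int_C(\omega\lrcorner\phi)|_C$ via Poincar\'e duality requires $(\omega\lrcorner\phi)$ to be $d$-closed, not merely $\bar\partial$-closed as you state. Since $X_0$ is K\"ahler this is easily repaired: write $F_B=H(F_B)+\partial\bar\partial h$ by the $\partial\bar\partial$-lemma, and check that the $\partial\bar\partial h$ contribution vanishes (using $\bar\partial(\omega\lrcorner\phi)=0$ and Stokes). This is part of the ``delicate bookkeeping'' you flag, but it deserves explicit mention.
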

\begin{proof}
Define $\dot{J}$ by
\[
J_t = J_0 + \dot{J}\,t + O(t^2).
\]
The \say{almost-complex} condition $J_t^2 = -\id$ implies that 
$\dot{J}$ and $J_0$ anti-commute, hence $\dot{J}$ 
can be thought as an element of $\Omega^{0,1}(X_0;T_{X_0})$; 
the \say{integrability} condition $\bar{\del}_t\bar{\del}_t = 0$ implies 
(see \cite[Lem.\,(1.8)]{Griff}) that $\dot{J}$ is $\bar{\del}$-closed as 
an element of $H^1(X_0;T_X)$. 
Under the Dolbeault isomorphism, $\dot{J}$ corresponds, up to a scalar multiple, 
to the Kodaira-Spencer class $\rho$. See Lemma (1.10) in \cite{Griff} for the proof.
Restricting $\dot{J}$ onto the curve $C$ and composing it with 
the natural projection $T_{X_0}|_{C} \to N_{C|X_0}$, we get the class 
$\kappa \in H^1(C;\scro_C(N_{C|X_0}))$ defined in Lemma \ref{rho-nonzero}. 
\smallskip%

We shall obtain an explicit Dolbeault representaive of $\kappa$. 
Letting $\ell$ be a holomorphic 
section of $L$ that vanishes along $C$, we get:
\begin{equation}\label{d-B-ell}
\exd_{B}\,\ell = \del\,\ell.
\end{equation}
Let $(\exd_{B}\,\ell)|_{C}$ be the restriction of $\exd_{B}\,\ell$ 
to $C$. By \eqref{d-B-ell}, $(\exd_{B}\,\ell)|_{C}$ vanishes 
along $T_{C}$. It follows that 
$(\del\,\ell)|_{C}$ gives an isomorphism between 
$\scro_C(N_{C|X_{0}})$ and $\scro_{C}(C)$. Under this isomorphism, 
the element $\kappa$ becomes
\[
\kappa = [\del \ell \circ \dot{J}|_{C}] \in H^1(X_0;\scro_{C}(C)).
\]
Hence, $\kappa$ sits in diagram \eqref{d:exact-standard},
\begin{equation}\label{d:exact-standard-again}
\begin{split} 
\ldots \to H^1(X_0;\scro_{X_0}(C)) \to H^1(X_0;\scro_{C}(C)) \to H^2(X_0;\scro_{X_0}) \to H^2(X_0;\scro_{X_0}(C)) \to 0.
\end{split}
\end{equation}
Let us compute the image of $\kappa \in H^1(X_0;\scro_{C}(C))$ under the connecting 
homomorphism of \eqref{d:exact-standard-again}. To that end, 
we pick an extension of $\kappa$ to 
a $C^{\infty}$-section of $\Lambda^{0,1} \otimes L$. 
Specifically, we pick 
$\del \ell \circ \dot{J} \in \Omega^{0,1}(X_0;L)$. Differentiating $\del \ell \circ \dot{J}$ gives
\[
\bar{\del} \left( \del \ell \circ \dot{J} \right) = Q \ell\quad 
\text{\normalfont{for some $Q \in \Omega^{0,2}(X_0;\cc)$.}}  
\]
The element $[Q] \in H^2(X_0;\scro_X)$ 
is the desired image. 
On the other hand, using the standard identities
\begin{equation}\label{eq:t-series}
    2\,\bar{\del}_t \ell = i \left( \del \ell \circ \dot{J} \right) t + O(t^2),\quad 
    \bar{\del}_t \bar{\del}_t \ell = \dot{F}_B^{0,2} \ell\, t + O(t^2), \quad 
    \bar{\del}_t \bar{\del}_t \ell - \bar{\del} \bar{\del}_t \ell = O(t^2),
\end{equation}
we get 
\[
\bar{\del} \left( \del \ell \circ \dot{J} \right)  = 
-2\,i \dot{F}^{0,2}_B \ell,\quad\text{hence $Q = -2\,i \dot{F}^{0,2}_B$.}
\]
By the adjunction formula, 
the restriction of $K_{X_0}$ to $C$ is trivial. 
Thus, by restricting the sections of $K_{X_0}$ to $C$, we do not 
obtain more than a one-dimensional space of sections; this gives either
\[
h^{0}(X_0; \scro_X(K_{X_0}-C)) = p_g(X_0) - 1\quad\text{or}\quad 
h^{0}(X_0; \scro_X(K_{X_0}-C)) = p_g(X_0).
\]
From the assumptions made about $h^{0}(X_0; \scro_X(K_{X_0}-C))$, we get: 
\begin{equation}\label{K-C-sections}
h^{0}(X_0; \scro_X(K_{X_0}-C)) = p_g(X_0) - 1.
\end{equation}
We have
\begin{equation}\label{rr-formula}
h^{0}(\scro_{X_0}(C)) - h^{1}(\scro_{X_0}(C)) + h^{2}(\scro_{X_0}(C))) = p_g(X_0),
\end{equation}
using the Riemann-Roch formula. Applying Serre duality to \eqref{rr-formula} gives 
\[
h^{0}(\scro_{X_0}(C)) - h^{1}(\scro_{X_0}(C)) + h^{0}(X_0; \scro_{X_0}(K_{X_0}-C)) = p_g(X_0).
\]
Substituting \eqref{K-C-sections} into \eqref{rr-formula} gives
\[
h^{0}(\scro_{X_0}(C)) - h^{1}(\scro_{X_0}(C)) = 1.
\]
Since $h^{0}(\scro_{X_0}(C)) = 1$, we have $h^{1}(\scro_{X_0}(C)) = 0$. 
It follows that the connecting 
homomorphism of \eqref{d:exact-standard-again} is injective, and hence 
$[\dot{F}^{0,2}_B] \in H^2(X_0;\scro_{X_0})$ is non-trivial. \qed
\end{proof}

\section{Proof of Theorem \ref{thm:main}}
Let $\left\{ X_t \right\}_{t \in \Delta}$, where $\Delta$ is 
a complex disk, be a complex-analytic 
family of compact simply-connected surfaces. Let $C \subset X_0$ be a smooth rational curve of self-intersection number $(-2)$. We construct a smooth 
family $\left\{ \omega_t \right\}_{t \in \Delta}$ of K{\"a}hler forms on 
$\left\{ X_t \right\}_{t \in \Delta}$. To that end, 
recall that the Kodaira classification of complex surfaces asserts that 
a complex surface is K{\"a}hler iff the first Betti number 
is even. Hence, each $X_t$ is K{\"a}hler, meaning that there exists 
some K{\"a}hler form on each fiber $X_t$. The existence of 
$\left\{ \omega_t \right\}_{t \in \Delta}$ then requires 
a partition of unity argument, combined with the following 
classical result of Kodaira and Spencer. See \cite[Th.\,15]{K-S}.
\begin{theorem}[Kodaira-Spencer, \cite{K-S}]
If $X_{t_0}$ carries 
a K{\"a}hler form, then, for a sufficiently small neighbourhood 
$U$ of $t_0$ in $\Delta$, the fiber $X_t$ over 
any point $t \in U$ admits a K{\"a}hler form. Moreover, 
given any K{\"a}hler form on $X_{t_0}$, we can choose 
a K{\"a}hler form on each fiber $X_t$, which depends differentiably 
on $t$ and which coincides for $t = t_0$ with the given K{\"a}hler 
form on $X_{t_0}$.
\end{theorem}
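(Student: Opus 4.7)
The plan is to exhibit $\omega_t$ on a neighborhood of $t_0$ as a smoothly varying harmonic representative of a $(1,1)_{J_t}$-class obtained by Hodge projection from $[\omega_{t_0}]$, and appeal to the openness of positivity on the compact manifold $X = X_{t_0}$ to conclude that $\omega_t$ remains K\"ahler. Both assertions of the theorem will follow from this single family construction.

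First I would differentiably trivialize $\calv$ over a neighborhood $U$ of $t_0$, identifying all fibers with $X$ carrying a smooth family of complex structures $\{J_t\}_{t \in U}$ with $J_{t_0}$ the original. Pick a smooth family of $J_t$-compatible Hermitian metrics $\{g_t\}$ extending the K\"ahler metric of $\omega_{t_0}$ at $t_0$; this is possible because the space of $J$-compatible metrics is a non-empty convex cone for each $J$. Let $\pi_t$ denote the projection onto real $g_t$-harmonic $(1,1)_{J_t}$-forms produced by the elliptic theory of the Laplacian on $(X, g_t)$ together with the $(p,q)$-type decomposition that is available on a K\"ahler family.

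Next I would set $\omega_t := \pi_t(\omega_{t_0})$. Since $\omega_{t_0}$ is closed, real, and already of type $(1,1)_{J_{t_0}}$ with $g_{t_0}$ its K\"ahler metric, at $t = t_0$ the projection returns $\omega_{t_0}$ itself; smooth dependence of $\pi_t$ on the parameter then gives $\omega_t \to \omega_{t_0}$ in $C^\infty$ as $t \to t_0$. Compactness of $X$ makes positivity an open condition in $C^0$, so $\omega_t$ is K\"ahler on $(X, J_t)$ for $t$ in a possibly smaller neighborhood of $t_0$, delivering both the existence statement and the differentiable dependence on the parameter.

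The main obstacle is precisely the smooth dependence of $\pi_t$ on $t$, together with the assertion that harmonic 2-forms decompose by $(p,q)$-type even for $t \neq t_0$. This requires that the Hodge numbers $h^{p,q}(J_t)$ remain locally constant and that the Hodge decomposition vary smoothly across the family, which is the central content of Kodaira-Spencer deformation theory for K\"ahler manifolds; the key input is the degeneration of the Fr\"olicher spectral sequence at $E_1$, an open condition by upper-semicontinuity of $E_\infty$-dimensions combined with constancy of Betti numbers in a smooth family. Once this is granted, smooth dependence of Green operators on smoothly varying elliptic coefficients yields the smoothly varying projection $\pi_t$ by standard parameterized elliptic theory, and the rest of the argument is formal.
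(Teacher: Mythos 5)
You should note first that the paper does not prove this statement at all: it is quoted as a classical result with a pointer to \cite{K-S}, Theorem 15, so the only fair comparison is with Kodaira--Spencer's actual argument.

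Measured against that, your proposal has a genuine gap at its central step, namely the definition and the claimed properties of the projection $\pi_t$. For $t\neq t_0$ the metric $g_t$ is merely Hermitian --- that it can be corrected to a K\"ahler one is precisely what is being proved --- and for a non-K\"ahler Hermitian metric neither natural reading of ``projection onto real harmonic $(1,1)_{J_t}$-forms'' produces a closed form: the $\Delta_{\bar{\partial}}$-harmonic $(1,1)$-forms represent Dolbeault classes but need not be $d$-closed (only $\bar{\partial}$- and $\bar{\partial}^*$-closed), while the $\Delta_d$-harmonic $2$-forms need not decompose into pieces of pure type, and the $(1,1)$-component of a $d$-closed form is in general not closed. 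Your proposed fix --- openness of $E_1$-degeneration of the Fr\"olicher spectral sequence, which is indeed correct by upper semicontinuity of $h^{p,q}_{\bar{\partial}}$ against the topological lower bound $\sum_{p+q=r}h^{p,q}\geq b_r$ --- only yields local constancy of the Dolbeault numbers. It does not yield the type decomposition of de Rham cohomology, Hodge symmetry, or the closedness of your projected form; those are consequences of the K\"ahler identities, which is exactly the circularity to avoid (and there are non-K\"ahler manifolds with $E_1$-degeneration where the Hodge decomposition fails). So as written, $\omega_t:=\pi_t(\omega_{t_0})$ is not known to be a closed $(1,1)$-form, and positivity-is-open cannot rescue closedness.

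The missing idea, which is the actual content of Kodaira--Spencer's proof, is to replace the naive harmonic projection by the kernel of a specially designed fourth-order self-adjoint elliptic operator $E_t$ on real $(1,1)$-forms, built from $\partial_t,\bar{\partial}_t$ and their adjoints so that every element of $\ker E_t$ is $d$-closed by construction and so that $\ker E_{t_0}$ coincides with the space of $g_{t_0}$-harmonic $(1,1)$-forms. One then shows $\dim\ker E_t$ is locally constant (upper semicontinuity of the kernel of an elliptic family against a topological lower bound, in the spirit of your Fr\"olicher argument but for the right operator), so the $E_t$-projection of $\omega_{t_0}$ varies smoothly, equals $\omega_{t_0}$ at $t_0$, and stays positive by openness of positivity on the compact fiber. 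The final paragraph of your proposal is therefore the right shape of argument, but applied to the wrong elliptic problem; without the operator $E_t$ (or an equivalent device guaranteeing closedness), the step ``set $\omega_t:=\pi_t(\omega_{t_0})$'' does not produce a K\"ahler form.
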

Let $L \to X_0$ be the holomorphic line bundle corresponding to the divisor 
$C$, let $B$ be any $\mib{U}(1)$-connection on $L$ which agrees 
with the holomorphic structure on $L$. Let $F_{B} \in \Omega^{1,1}(X_0;\cc)$ be the curvature form of $B$, and let $[\dot{F}^{0,2}_B] \in H^2(X_0;\scro_X)$ be as in \S \ref{sec:kodaira}. 
Assume that 
\[
[\dot{F}^{0,2}_{B}] \neq 0.
\] 
Then there is a smaller disk $U \subset \Delta$ such that for each 
$t \in U - \left\{ 0 \right\}$, $[C] \in H^2(X_t;\zz)$ is not a $(1,1)$-class.  
By passing to the smaller disk $U$ if necessary, we assume that 
$\Delta = U$. 
\begin{lemma}\label{prop:no-curves}
Let $\left\{ X_t \right\}_{t \in \Delta}$ be the family of surfaces 
described above. Then
\begin{enumerate}[label=\normalfont{(\alph*)}]
\item If $t \in \Delta - \left\{ 0 \right\}$, then 
$X_t$ contains no effective divisors representing $\pm [C]$.
\smallskip%

\item $X_0$ contains a single effective divisor representing $[C]$ and contains 
no effective divisors representing $(-[C])$.
\end{enumerate}
\end{lemma}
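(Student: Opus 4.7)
The plan is to combine two earlier inputs: the failure of $[C]$ to be of type $(1,1)$ on fibers near $X_0$ (already noted in the paragraph preceding the lemma, using Lemma \ref{F-lemma}), together with the computation $h^0(\scro_{X_0}(C)) = 1$ carried out inside the proof of Lemma \ref{F-lemma}, plus positivity of K\"ahler forms.

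For part (a), the paragraph preceding the lemma already observes, via the expansion $(F_B^{0,2})_t = t\,\dot{F}_B^{0,2} + O(t^2)$ and the standing hypothesis $[\dot{F}_B^{0,2}] \neq 0$, that the class $[C]$ (transported to $H^2(X_t;\zz)$ via the smooth trivialization \eqref{d:trivial-v}) is not of type $(1,1)$ with respect to $J_t$ for any $t \in \Delta - \{0\}$. The Lefschetz $(1,1)$ theorem says that the Poincar\'e dual of any effective divisor on a compact K\"ahler manifold lies in $H^{1,1}\cap H^2(\cdot;\zz)$. Since $H^{1,1}$ is a linear subspace, neither $[C]$ nor $-[C]$ can be the class of an effective divisor on $X_t$.

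For part (b), uniqueness of the effective divisor in $[C]$ on $X_0$ follows immediately from $h^0(\scro_{X_0}(C)) = 1$: the complete linear system $|C|$ has a single member, which must be $C$ itself. To rule out an effective divisor $D$ in class $-[C]$, I would pair with the K\"ahler form $\omega_0$. On the one hand, $\omega_0 \cdot D > 0$ because $D$ is nonzero and effective; on the other hand, $\omega_0 \cdot D = -\omega_0 \cdot C < 0$ because $C$ is also nonzero and effective. The contradiction finishes this part.

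The argument is essentially bookkeeping around two prior inputs, so no single step is a serious obstacle. The subtlest point is perhaps verifying that the expansion $(F_B^{0,2})_t = t\,\dot{F}_B^{0,2} + O(t^2)$ genuinely captures the first-order Hodge decomposition of the transported class $[C]$ on $X_t$; but this is already the content of the discussion in \S \ref{sec:kodaira}, and needs only to be cited.
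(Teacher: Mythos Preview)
Your proof is correct and follows essentially the same route as the paper. Part (a) is identical; for part (b) the paper argues uniqueness directly from $C$ having negative self-intersection rather than by citing $h^0(\scro_{X_0}(C))=1$ from Lemma~\ref{F-lemma}, but these are two phrasings of the same fact, and the exclusion of $-[C]$ via positivity of $\omega_0$ is the same in both.
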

\begin{proof}
For each $t \in \Delta - \left\{ 0 \right\}$, $[C] \not\in H^{1,1}(X_t;\rr)$, and 
(a) follows. To prove (b), recall that $C$ is smooth and has negative self-intersection 
number; thus there exists at most one divisor equivalent to $C$. Since
\[
\int_C \omega_0 > 0,
\]
it follows that $(-C)$ cannot be effective. \qed
\end{proof}
\smallskip%

Let $\left\{ g_t \right\}_{t \in \Delta}$ be 
the family of K{\"a}hler metrics corresponding to $\left\{ \omega_t \right\}_{t \in \Delta}$. 
For $\varepsilon \in H^2(X_0;\zz)$, let 
$\fr{s}_{\varepsilon}$ be the spin$^{\cc}$ structure on $X_0$ given by \eqref{eq:spin-eps}. 
Choose a spin$^{\cc}$ structure on 
$\left\{ X_t \right\}_{t \in \Delta}$ extending $\fr{s}_{\varepsilon}$. 
\begin{lemma}\label{lem:no-monopoles}
(The notation are as in \S\,\ref{sec:family}.) If 
$\varepsilon = -[C]$, then the parameterized moduli space 
\[
\bigcup_{t \in \Delta} \scrm_{(g_t, -\rho^2\,\omega_t)}
\]
over the disk 
\[
\Delta \to \Omega^{*},\quad t \to (t, -\rho^2\,\omega_t)
\]
is empty for $\rho$ large enough.
\end{lemma}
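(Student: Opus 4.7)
The plan is to apply Theorem \ref{thm:witten} fiberwise and then combine it with Lemma \ref{prop:no-curves}. For each fixed $t \in \Delta$, the Kähler surface $(X_t, \omega_t)$ with metric $g_t$ and spin$^{\cc}$ structure $\fr{s}_{\varepsilon}$, $\varepsilon = -[C]$, admits (by Theorem \ref{thm:witten}) for $\rho$ sufficiently large a bijection between $\scrm_{(g_t, -\rho^2 \omega_t)}$ and effective divisors on $X_t$ representing the class $-[C]$. Thus the first job is to show such divisors never exist.

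This is precisely Lemma \ref{prop:no-curves}. For $t \in \Delta - \{0\}$, the class $[C]$ (and therefore $-[C]$) is not of type $(1,1)$ on $X_t$, so no holomorphic divisor can represent it. For $t = 0$, the class $-[C]$ pairs negatively with the Kähler class $[\omega_0]$ because $\int_C \omega_0 > 0$, so it cannot be represented by an effective divisor either. Combining this with Theorem \ref{thm:witten} shows that, for each $t \in \Delta$, there is some $\rho_0(t)$ such that $\scrm_{(g_t, -\rho^2 \omega_t)} = \emptyset$ for all $\rho \geq \rho_0(t)$.

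The remaining step, and the main technical point, is to upgrade these pointwise bounds to a uniform bound $\rho_0$ valid for all $t \in \Delta$ simultaneously (after shrinking $\Delta$ to a compact sub-disk if necessary; recall that in Section \ref{sec:family} we only need $\Delta$ small enough so that each $X_t$ fails to carry a divisor in class $[C]$). I expect this to be straightforward compactness: the quantitative form of Theorem \ref{thm:witten}, proved via the Weitzenböck identity, gives a bound for $\rho_0(t)$ depending only on the scalar curvature, the Kähler form, and the curvature of the canonical connection of $\fr{s}_\varepsilon$, all of which vary continuously in $t$; over a compact $\Delta$ we may therefore set $\rho_0 := \sup_t \rho_0(t) < \infty$.

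Alternatively, one may argue by contradiction: if no uniform $\rho_0$ existed, there would be sequences $t_n \in \Delta$ with $t_n \to t_\infty$, $\rho_n \to \infty$, and solutions $(\ell_n, 0, B_n)$ of the $\rho_n$-perturbed equations on $X_{t_n}$ in class $-[C]$. Standard Seiberg-Witten compactness (after gauge fixing) would produce a limit solution on $X_{t_\infty}$ whose zero locus of $\ell$ is an effective divisor in class $-[C]$ on $X_{t_\infty}$, contradicting Lemma \ref{prop:no-curves}. The delicate point in either route is verifying that the effective-divisor correspondence from Theorem \ref{thm:witten} is stable under the fiberwise variation of complex structures; since the curve $C$ and its line bundle $L$ vary holomorphically with $t$, this is a continuity statement for which the family Kodaira-Spencer framework of Section \ref{sec:kodaira} is tailor-made.
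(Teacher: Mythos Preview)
Your proposal is correct and matches the paper's approach exactly: the paper's proof is simply ``Follows from Theorem \ref{thm:witten} and Lemma \ref{prop:no-curves}.'' Your discussion of the uniformity of $\rho_0$ in $t$ is a reasonable elaboration that the paper omits as routine (it follows from the quantitative Weitzenb\"ock bound, as you note); your closing remark about $C$ and $L$ ``varying holomorphically with $t$'' is, however, misstated---the curve $C$ lives only in $X_0$---and in any case unnecessary, since Lemma \ref{prop:no-curves} already disposes of every fiber directly without any stability statement for the divisor correspondence.
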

\begin{proof}
Follows from Theorem \ref{thm:witten} and Lemma \ref{prop:no-curves}. \qed
\end{proof}
\smallskip%

On the other hand, we have:
\begin{theorem}[Kronheimer, \cite{K}]\label{thm:transverse}
If $\varepsilon = [C]$, the parameterized moduli space
\[
\bigcup_{t \in \Delta} \scrm_{(g_t, -\rho^2\,\omega_t)},
\]
consists of a single point corresponding to the divisor $C$; the image 
of this point under $\pi$ is $(0, -\rho^2\,\omega_0)$. 
Furthermore, $\pi$ is transverse to the disk
\begin{equation}\label{my-disk}
\Delta \to \Omega^{*},\quad t \to (t, -\rho^2\,\omega_t)
\end{equation}
at $(0, -\rho^2\,\omega_0)$.
\end{theorem}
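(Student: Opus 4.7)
\emph{Strategy.} I would reduce the theorem to the Kodaira--Spencer non-vanishing of Lemma \ref{F-lemma} via the K{\"a}hler description of \S\ref{sec:complex}. The first claim is immediate: since $\omega_t$ and $g_t$ depend smoothly on $t \in \Delta$, the threshold in Theorem \ref{thm:witten} can be chosen uniformly, so for $\rho$ above this threshold the fiberwise moduli space $\scrm_{(g_t, -\rho^2 \omega_t)}$ is in bijection with effective divisors on $X_t$ in the class $[C]$. Lemma \ref{prop:no-curves} then shows that the union over $\Delta$ reduces to a single monopole $(\ell, 0, B) \in \scrm_{(g_0, -\rho^2 \omega_0)}$, where $\ell \in \Omega^0(L_{[C]})$ vanishes exactly along $C$ and $B$ is a holomorphic $\mib{U}(1)$-connection on $L = L_{[C]}$.

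For the transversality claim, adjunction forces $c_1(X_0) \cdot [C] = 0$, so $d(\call) = -2$ and $\pi$ is Fredholm of index $-2$. By Theorem \ref{thm:l-l}, transversality of $\pi$ to the disk $\sigma(t) = (t, -\rho^2 \omega_t)$ at the image point is equivalent to surjectivity of the composite
\[
T_0\,\sigma(\Delta) \hookrightarrow T_{(0, -\rho^2 \omega_0)} \Omega^{*} \twoheadrightarrow \coker \exd\pi \;=\; \calh^2_{(\ell, 0, B)}.
\]
Running the Friedman--Morgan sequence of Theorem \ref{thm:f-m} with $H^1(\scro_{X_0}) = 0$ (from $\pi_1(X_0) = 0$) and $h^1(\scro_{X_0}(C)) = 0$ (proved inside Lemma \ref{F-lemma}), one identifies $\calh^2_{(\ell, 0, B)}$ with the kernel of $H^2(\scro_{X_0}) \to H^2(\scro_{X_0}(C))$, a complex line. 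Thus it suffices to show that the above composite is non-zero.

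The plan is a first-order expansion at $t = 0$. Using the $C^\infty$-trivialization of \eqref{d:trivial-v}, I would evaluate the family monopole map on the fixed triple $(\ell, 0, B)$ and differentiate in $t$. Since the perturbation $-\rho^2 \omega_t$ stays of type $(1,1)$, it feeds only into the middle slot of the target and is invisible to the Salamon functional $\delta$ of Lemma \ref{prop:salamon}. The expansions \eqref{eq:t-series} then give
\[
\dot\gamma = \tfrac{i}{2}\,\partial \ell \circ \dot J, \qquad \dot\nu = 2\,\dot F^{\,0,2}_B
\]
for the $(0,1)$- and $(0,2)$-components of the first-order variation. Substituting these into the formula for $\delta$ and invoking the identity $\bar\partial_B(\partial \ell \circ \dot J) = -2i\,\dot F^{\,0,2}_B\,\ell$ from Lemma \ref{F-lemma}, the $T^{-1}$-correction term cancels cleanly and $\delta$ evaluates to $2\,[\dot F^{\,0,2}_B]$ in $H^2(\scro_{X_0})$, which is non-zero by Lemma \ref{F-lemma}. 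This produces a non-zero image in the one-dimensional space $\calh^2_{(\ell, 0, B)}$, hence surjectivity.

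The main obstacle is this last computation: one has to honestly expand all the metric- and complex-structure-dependent pieces of the family monopole map to first order in $t$, check that the $(1,1)$-variations from $g_t$ and $\omega_t$ contribute nothing beyond the middle slot, and track the cancellation through $\delta$ and $T^{-1}$ to isolate the term $[\dot F^{\,0,2}_B]$. Once this bookkeeping is settled, the theorem is forced by the Kodaira--Spencer computation of \S\ref{sec:kodaira}.
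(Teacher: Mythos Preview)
Your proposal is correct and follows essentially the same route as the paper: identify the single monopole via Theorem~\ref{thm:witten} and Lemma~\ref{prop:no-curves}, then verify transversality by expanding $\mu^{\heartsuit}$ to first order along the disk with $(\ell,\beta,B)$ held fixed and applying the map $\delta$ of Lemma~\ref{prop:salamon} to obtain $2[\dot F^{0,2}_B]\neq 0$. The only cosmetic difference is that the paper phrases the reduction via $\calh^1_{(\ell,0,B)}=0$ (hence ``non-tangency suffices''), whereas you phrase it via $\dim_{\cc}\calh^2_{(\ell,0,B)}=1$ (hence ``one non-zero vector suffices''); these are equivalent since $\pi$ has index $-2$.
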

\begin{proof}
This is a special case of Proposition 3.2 of \cite{K}, 
proved in full in \S\,4 in the same paper. The only thing to prove is the transversality of $\pi$, 
as the rest follows by Lemma \ref{prop:no-curves}. Let $([0,-\rho^2\, \omega_0], \varphi, A) \in \fr{D}$ 
be a gauge representative of the only point of 
$\scrm_{(g_0, -\rho^2\,\omega_0)}$. By Theorem \ref{thm:f-m}, $\calh^1_{(\varphi,A)} = 0$; thus it suffices to show that the image of $\pi$ is not tangent to \eqref{my-disk}. Choose a map $([0,-\rho^2\, \omega_0], \varphi, A) \colon \Delta \to \fr{D}$ with $\varphi(0) = \varphi$, $A(0) = A$. We shall prove that 
the element
\begin{equation}\label{not-tangent}
\left[ \left. 
\dfrac{\exd}{\exd t}\right\vert_{t = 0}\, 
\mu^{\heartsuit}([t,g_t,-\rho^2\,\omega_t], \varphi(t), A(t))
\right]
 \in \calh^2_{(\varphi,A)}.
\end{equation}
is non-trivial. 
With the notation of \S\,\ref{sec:complex} and \S\,\ref{sec:kodaira}, we have:
\[
(\varphi,A) = (\ell,0,B),\quad (\varphi(t),A(t)) = (\ell(t),\beta(t),B(t)),
\]
\[
\mu_{(g_t, -\rho^2\, \omega_t)}(\ell(t),\beta(t),B(t)) = 
\left( \bar{\del}_t \ell(t) + \bar{\del}^{*}_t \beta(t), \ldots,\  
2\,(F^{0,2}_{B})_t - \dfrac{\ell^{*}(t) \beta(t)}{2} \right) 
\]
Putting, without loss of generality, $\ell(t) \equiv \ell$, 
$\beta(t) \equiv 0$, we get:
\[
\mu_{(g_t, -\rho^2\, \omega_t)}(\ell(t),\beta(t),B(t)) = \left( \bar{\del}_t \ell, \ldots,\  
2\,\dot{F}^{0,2}_{B} 
\right) + O(t^2).
\]
We now compute 
$\delta(\bar{\del}_t \ell, \ldots, 2\,\dot{F}^{0,2}_{B})$, 
where $\delta$ is the map defined in \S\,\ref{sec:complex}. 
Using \eqref{eq:t-series}, we get:
\[
\delta(\bar{\del}_t \ell, \ldots, 2\,\dot{F}^{0,2}_{B}) = 
2\,[\dot{F}^{0,2}_{B}] \in H^2(X_0;\scro_X).
\]
The assumption that $[\dot{F}^{0,2}_{B}] \neq 0$ implies that 
\eqref{not-tangent} is a non-zero, and the theorem follows. \qed
\end{proof}
\medskip%

Now suppose further that $C$ is a $(-1,-1)$-curve. Suppose further that $p_g(X_0) > h^{0}(X_0; \scro_X(K_{X_0}-C))$. It follows from Lemma \ref{F-lemma} that 
$[\dot{F}^{0,2}_{B}] \neq 0$, and Theorem \ref{thm:transverse} can be applied.
\smallskip%

Let $\left\{ X'_t \right\}_{t \in \Delta}$ be the complex-analytic 
family obtained from $\left\{ X_t \right\}_{t \in \Delta}$ by the 
elementary modification with center $C$. Let $C' = \rho(C)$ be the $(-1,-1)$-curve 
as in Theorem \,\ref{thm:burns}. We furnish $\left\{ X'_t \right\}_{t \in \Delta}$ with a family of K{\"a}hler forms 
$\left\{ \omega'_t \right\}_{t \in \Delta}$. 
Recall that the set of K{\"a}hler forms on a 
K{\"a}hler manifold is a convex cone. Hence, we may deform 
$\left\{ \omega'_t \right\}_{t \in \del \Delta}$ so that
\begin{equation}\label{eq:symplectic-forms-equal}
\text{$\rho_C \colon (X_t,\omega_t) \to (X'_t,\omega'_t)$ is a symplectomorphism for each $t \in \del \Delta$, }
\end{equation}
Considering 
the family of K{\"a}hler metrics 
$\left\{ g'_t \right\}_{t \in \Delta}$ associated to 
$\left\{ \omega'_t \right\}_{t \in \Delta}$, we have:
\begin{lemma}\label{prop:element-tr-applied}
Lemma \ref{prop:no-curves} holds for 
$\left\{ X'_t \right\}_{t \in \Delta}$ with $C$ replaced by $C'$ 
and $\left\{ g_t, \omega_t \right\}_{t \in \Delta}$ replaced 
by $\left\{ g'_t, \omega'_t \right\}_{t \in \Delta}$; and likewise for 
Theorem \ref{thm:transverse} and Lemma \ref{lem:no-monopoles}.
\end{lemma}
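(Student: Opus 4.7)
The plan is to observe that the flopped family $\{X'_t\}_{t\in\Delta}$ satisfies exactly the same structural hypotheses that were used to prove the original statements, so that their proofs transfer almost verbatim. First I would record what Theorem \ref{thm:burns} gives: $C' \subset X'_0$ is a smooth rational curve, embedded in the total space $\calv'$ as a $(-1,-1)$-curve, and $\rho$ restricts to a proper isomorphism $X_0 \to X'_0$ sending $C$ to $C'$. Under this isomorphism, canonical bundles correspond, so $p_g(X'_0) = p_g(X_0)$ and $h^0(\scro_{X'_0}(K_{X'_0} - C')) = h^0(\scro_{X_0}(K_{X_0} - C))$. In particular the key numerical inequality $p_g(X'_0) > h^0(\scro_{X'_0}(K_{X'_0} - C'))$ from \eqref{eq:my-assumptions} continues to hold.

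Second, I would apply Lemma \ref{F-lemma} to the primed family. Letting $L' \to X'_0$ be the line bundle corresponding to $C'$ and $B'$ any $\mib{U}(1)$-connection compatible with its holomorphic structure, this yields $[\dot{F}'^{0,2}_{B'}] \neq 0$ in $H^2(X'_0;\scro_{X'_0})$. As at the beginning of \S\,6, shrinking $\Delta$ if necessary, this in turn forces $[C'] \notin H^{1,1}(X'_t;\rr)$ for every $t \in \Delta \setminus \{0\}$. The analogue of Lemma \ref{prop:no-curves}(a) then follows: no $(1,1)$-class, no effective divisor representing $\pm[C']$. Part (b) follows exactly as in the original proof, using the smoothness and negative self-intersection of $C'$ together with $\int_{C'} \omega'_0 > 0$. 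Lemma \ref{lem:no-monopoles} for the primed family then follows from Theorem \ref{thm:witten} combined with the primed analogue of Lemma \ref{prop:no-curves}.

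Finally, for the primed version of Theorem \ref{thm:transverse}, the argument is word-for-word the same: there is a unique effective divisor in the class $[C']$, namely $C'$ itself, giving the unique point of the parameterized moduli space over $\Delta$; and the transversality of $\pi'$ at $(0,-\rho^2\omega'_0)$ is established by computing $\delta'(\bar{\del}'_t \ell', \ldots, 2\,\dot{F}'^{0,2}_{B'}) = 2\,[\dot{F}'^{0,2}_{B'}]$ via the expansion \eqref{eq:t-series} and applying Lemma \ref{F-lemma}. No step here requires more than substituting primed objects into the proofs of \S\S 5--6; the only semantic content is the bookkeeping that the flop preserves all the invariants that feed into those proofs, which is exactly what Theorem \ref{thm:burns} and the remarks in the first paragraph supply. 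The one point worth double-checking is that the chosen family $\{\omega'_t\}_{t\in\Delta}$ of K{\"a}hler forms, adjusted as in \eqref{eq:symplectic-forms-equal} to match $\omega_t$ through $\rho_C$ on the boundary, still depends smoothly on $t$; this is an easy partition-of-unity argument inside the K{\"a}hler cone, and is the only mild obstacle in an otherwise formal verification.
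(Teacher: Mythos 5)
Your proof is correct and follows exactly the route the paper intends: the paper in fact states Lemma \ref{prop:element-tr-applied} without any proof, treating it as immediate from the symmetry supplied by Theorem \ref{thm:burns}, and your write-up simply makes explicit the bookkeeping (that $C'$ is a smooth rational $(-1,-1)$-curve in $\calv'$, that the isomorphism $X_0\cong X'_0$ preserves $p_g$ and $h^0(K-C)$ so Lemma \ref{F-lemma} applies to give $[\dot F'^{0,2}_{B'}]\neq 0$) that the paper leaves implicit. No discrepancy with the paper's argument.
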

Gluing the families 
$\left\{ X_t \right\}_{t \in \Delta}$ and 
$\left\{ X'_t \right\}_{t \in \Delta}$ by the map $\rho_{C}$,
\[
\left( \Delta \times X_t \right) \cup \left( \Delta \times X'_t \right)/\sim,\quad 
(t, x) \sim (t, \rho_C(x)),\ t \in \del \Delta,\ \text{as in \eqref{eq:make-W},}
\]
we obtain a (not complex-analytic but merely differentibale) family 
$\left\{ X_s \right\}_{s \in S^2}$ of K{\"a}hler surfaces 
parameterized by the sphere $S^2 = \Delta \cup_{\id} \Delta$. 
From \eqref{eq:symplectic-forms-equal} 
we see that the families $\left\{ \omega_t \right\}_{t \in \Delta}$ and 
$\left\{ \omega'_t \right\}_{t \in \Delta}$ form 
a family of K{\"a}hler forms $\left\{ \omega_s \right\}_{s \in S^2}$.  
\smallskip%

Regarding $\left\{ X_t \right\}_{t \in \Delta}$ and 
$\left\{ X'_t \right\}_{t \in \Delta}$ as subfamilies of 
$\left\{ X_s \right\}_{s \in S^2}$, we let $X_{s_0}$ be 
$X_0$, and let $X_{s'_0}$ be $X'_0$. As $\pi_1(S^2) = 0$, 
we may canonically identify $H_2(X_{s_0};\zz)$ with 
$H_2(X_{s'_0};\zz)$. With this identification at hand, we apply 
formula \eqref{eq:reid} to get:
\[
[C] = -[C'].
\]
Let us consider the family Seiberg-Witten invariants of 
$\left\{ X_s \right\}_{s \in S^2}$. Let $\fr{s}_{[C]}$ be 
the spin$^{\cc}$ structure on $X_{s_0}$ 
given by \eqref{eq:spin-eps} with $\varepsilon = [C]$. Similarly, 
let $\fr{s}_{-[C]}$ be the spin$^{\cc}$ structure on $X_{s_0}$ with 
$\varepsilon = -[C]$. Choose spin$^{\cc}$ structures on 
$\left\{ X_s \right\}_{s \in S^2}$ extending $\fr{s}_{[C]}$ and $\fr{s}_{-[C]}$. 
We use $\fr{s}_{[C]}$, $\fr{s}_{-[C]}$ to denote 
these extensions, also. Combining Theorem \ref{thm:transverse} and 
Lemma \ref{prop:element-tr-applied}, we get: 
\[
\fsw(\fr{s}_{[C]}) = \fsw(\fr{s}_{-[C]}) = 1.
\]
If $p_g(X_0) > 1$, then $b^{+}(X) = 2\,p_g + 1 > 3$, and the 
family invariants are well defined. The family 
of surfaces $\left\{ X_s \right\}_{s \in S^2}$ cannot carry a 
family of cohomologous symplectic forms, as this would contradict 
Lemma \ref{prop:vanish-for-bundles}. This completes the proof.

\bibliographystyle{plain}
\bibliography{references}

\end{document}